\def\R{\mathbb{R}}
\newcommand{\M}{\mathcal{M}(\Sigma,\phi)}
\newcommand{\reeb}{\mathcal{S}_{\textrm{Reeb}}}
\newcommand{\dd}{\mathcal{D}}
\newtheorem{theorem}{Theorem}[section]
\newtheorem{lemma}[theorem]{Lemma}
\newtheorem{corollary}[theorem]{Corollary}
\theoremstyle{definition}
\newtheorem{example}[theorem]{Example}
\newtheorem{remark}[theorem]{Remark}
\newtheorem{definition}[theorem]{Definition}
\title[Tight Contact Structures on Contact Mapping Tori and their Folded Sums]
{Tight Contact Structures on Contact Mapping Tori \\ and their Folded Sums}
\author{Mehmet Firat Arikan}
\address{Dept. of Mathematics, Middle East Technical University, Ankara, TURKEY}
\email{farikan@metu.edu.tr}
\subjclass[2010]{58D27, 58A05, 57R65}
\keywords{Contact structure, tight, mapping tori, folded sum, exact symplectic, Reeb orbit}
\begin{document}
\begin{abstract}
It is known that the folded sum of two contact mapping tori whose fibers are compact exact symplectic manifolds having a common convex boundary (called the ``fold'') admits a cooriented contact structure compatible with the obvious fibration map onto the circle. Here we first provide an alternative bundle-theoretical construction of such a ``folded'' contact structure based on a gluing process near the fold. Moreover, we prove that in any odd dimension $2n+1\geq 7$ a folded contact structure on a folded sum of two contact mapping tori is tight if the induced contact form on the (common) contact fold admits no contractible Reeb orbit. In particular, any contact mapping torus of an odd dimension $2n+1\geq 7$ is tight if the induced contact form on the convex boundary of a fiber admits no contractible Reeb orbit.

\end{abstract}

\maketitle

%
%

\section{Introduction}
In the literature, the ``folded sum operation'' (Definition \ref{def:Folded_Sum_of_Mapping_Tori}) can be found under the name ``blown-up summed open book'' (see \cite{W}) whose particular form (considered in the present paper) can also be seen as a ``contact fiber connected sum'' of two open books along their common bindings (see \cite{Ge}).

\medskip
For any smooth manifold $\Sigma$ and a chosen self-diffeomorphism $\phi \in \textrm{Diff}(\Sigma)$, the \textit{mapping torus} $\M$ is the quotient manifold of $\Sigma \times [0,1]$ obtained by identifying each point $x \in \Sigma \times \{0\}$ with its image $\phi(x) \in \Sigma \times \{1\}$. (If $\Sigma$ is orientable, so is $\M$ provided $\phi \in \textrm{Diff}\,^+(\Sigma)$.) By its definition, $\M$ fibers over $S^1$ and $\phi$ is the \textit{monodromy}. Equivalently, in bundle theoretical words, there is a (locally trivial) fiber bundle $\pi: \M \longrightarrow S^1$ with fibers $\Sigma$ and the structure group $\textrm{Diff}(\Sigma)$.

\medskip
If we require $\phi \in \textrm{Diff}\,^+(\Sigma)$ to be identity near $\partial \Sigma$ (i.e., $\phi \in \textrm{Diff}\,^+(\Sigma,\partial \Sigma)$), then one can glue $\M$ and $\partial \Sigma \times D^2$ along their boundaries (so that the angular coordinate on $D^2$ agrees with the map $\pi$) to obtain a closed orientable manifold
\begin{center}
$Y(\Sigma,\phi)=\M \cup_{\partial}(\partial \Sigma \times D^2).$
\end{center}
Such a decomposition is called an (\textit{abstract}) \textit{open book} of the resulting manifold $Y(\Sigma,\phi)$ and will be denoted by $\mathcal{OB}(\Sigma,\phi)$.

\medskip
Throughout this note, we will restrict our attention to the case where the fiber $(\Sigma,d\beta)$ is a compact exact symplectic manifold with convex (so contact) boundary $(\partial \Sigma, \textrm{Ker}(\beta|_{T\partial \Sigma}))$ and $\phi\in \textrm{Exact}(\Sigma,\partial \Sigma, d\beta)\subset \textrm{Diff}\,^+(\Sigma,\partial \Sigma)$ is an exact symplectomorphism which is identity near $\partial \Sigma$. Such a mapping torus will be called a \textit{contact mapping torus}.

\medskip
If $\M$ is a contact mapping torus, then $\mathcal{OB}(\Sigma,\phi)$ is a \textit{contact open book}. That is, one can construct a contact form on $\M$ and glue with the natural one on $\partial \Sigma \times D^2$ to obtain a contact form on $Y(\Sigma,\phi)$ (see \cite{TW}, \cite{Gi, Gi2}). Their construction of the contact form on $\M$ is based on the fact that the monodromy is an exact symplectomorphism of the fiber (which is identity near the boundary), and also that the identification map building $\M$ can be chosen in such a way that it pulls back the product contact form on $\Sigma \times [0,1]$ to itself.

\medskip
In this note, we will construct contact forms on contact mapping tori by making use of their (exact) symplectic bundle structures. As pointed out in Remark \ref{rmk:new_and_old_structures_isotopic}, the resulting contact structures will be contactomorphic to the ones constructed in \cite{TW, Gi, Gi2}. Contact mapping tori are the simplest compact contact symplectic fibrations which are studied in \cite{Ari} where a natural bundle theoretical method constructing contact forms on them is given. Such contact structures are called \textit{bundle contact structures}. This bundle theoretical construction has more advantages in constructing and studying contact forms on folded sums of contact mapping tori. For instance, it is easier to adapt it to the technique given in \cite{GS} where contact forms on products of closed $4$-manifolds with $S^1$ are constructed.

\medskip
Our main existence and tightness results for folded sums of contact mapping tori can be stated as follows (see Section \ref{sec:Contact_Structures_on_Folded_Sums} and Section \ref{sec:Tight_Contact_Structures_on_Folded_Sums} for the precise statements): 

\begin{theorem} \label{thm:Contact_Structure_on_Folded_Sum_0}
For $n\geq 1$, let $\pi_i: \mathcal{M}(\Sigma^{2n}_i,\phi_i) \longrightarrow S^1$ be any two contact mapping tori with the structure groups $\emph{Exact}(\Sigma_i,\partial \Sigma_i, d\beta_i)$ such that $(\Sigma^{2n}_1, d\beta_1)$ and $(\Sigma^{2n}_2, d\beta_2)$ have the same convex boundary. Then the total space $\mathcal{M}$ of their ``folded sum'' admits a (co-oriented) contact structure $\eta=\emph{Ker}(\sigma)$ which is compatible with the (naturally arising) fibration map $\pi:\mathcal{M} \to S^1$, i.e., $d\sigma$ restricts to a folded exact symplectic form on every fiber.
\end{theorem}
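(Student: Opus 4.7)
My plan is to adapt the bundle-theoretical construction of contact forms on a single contact mapping torus (cf.\ \cite{Ari}) to the folded sum by an explicit gluing in a collar neighborhood of the fold. The guiding principle is that the fibration $\pi:\mathcal{M}\to S^1$ inherits, away from the fold, a symplectic connection from each mapping torus, so a candidate contact form can be written as a ``vertical'' Liouville primitive on the fibers plus a ``horizontal'' multiple of $dt$. I would construct such a form on each half of $\mathcal{M}$ away from the fold and then patch them to a model form on a fold-neighborhood.

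On each $\mathcal{M}(\Sigma_i,\phi_i)$, I would produce a bundle contact form $\alpha_i = \beta_i + K_i\,dt$, where $K_i$ is a positive function chosen large enough that $\alpha_i\wedge(d\alpha_i)^n > 0$. Because $\phi_i \in \mathrm{Exact}(\Sigma_i,\partial\Sigma_i,d\beta_i)$, the difference $\phi_i^*\beta_i - \beta_i$ is exact with primitive $h_i$ supported away from $\partial\Sigma_i$; absorbing $h_i$ into the $dt$-component of $\alpha_i$ makes the form descend from $\Sigma_i\times[0,1]$ to the mapping torus. Near a collar $\partial\Sigma_i\times[0,\varepsilon)$, where $\phi_i$ is the identity and $\beta_i$ is the standard Liouville form $e^{r}\alpha$ with $\alpha$ the common contact form on the fold $\partial\Sigma_1 = \partial\Sigma_2$, this $\alpha_i$ becomes completely explicit.

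For the fold gluing I would model a neighborhood of the fold as $\partial\Sigma\times S^1\times(-\delta,\delta)$ with coordinate $s$ (negative on the $\Sigma_1$-side, positive on the $\Sigma_2$-side), and take the ansatz
\[
\sigma = f(s)\,\alpha + g(s)\,dt,
\]
with $f$ a smooth positive function satisfying $f'(0)=0$ and $f''(0)>0$. A direct computation yields
\[
\sigma\wedge(d\sigma)^n = n\, f^{n-1}(fg' - gf')\, \alpha\wedge(d\alpha)^{n-1}\wedge ds\wedge dt,
\]
so the contact condition reduces to the monotonicity $(g/f)'>0$, while the fiber-restriction $d\sigma|_{\mathrm{fiber}} = f'\,ds\wedge\alpha + f\,d\alpha$ has rank $2n$ off $\{s=0\}$ and drops transversally to rank $2n-2$ at $s=0$---i.e., it is folded exact symplectic with fold along $\{s=0\}$. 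I would then arrange $f(s) = e^{|s|}$ outside a small interval around $0$ so that $\sigma$ agrees there with the collar expressions of the $\alpha_i$.

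The main obstacle I anticipate is ensuring that the interpolation between this fold ansatz and the bundle contact forms $\alpha_i$ outside the fold neighborhood remains contact throughout the transition region, where cutoff derivatives of $\beta_i$ and of the horizontal coefficient contribute error terms to $\sigma\wedge(d\sigma)^n$. This is controlled by choosing $K_i$ (and $g$) sufficiently large so that the positive $dt$-contribution dominates all such errors, in the spirit of the largeness argument in \cite{GS}; once this is arranged, the monotonicity $(g/f)' > 0$ can be maintained globally, delivering a well-defined contact form $\sigma$ on $\mathcal{M}$ whose kernel is the desired cooriented contact structure $\eta$ compatible with $\pi$.
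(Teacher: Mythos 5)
Your proposal follows essentially the same route as the paper's proof: bundle contact forms that reduce to the product $Ke^{t+1}\alpha+K\,d\theta$ on collars of the common convex boundary, glued across a neck by an interpolation $f\alpha+g\,d\theta$ whose contact condition is a Wronskian inequality on $(f,g)$ and whose restriction to each fiber degenerates exactly, and transversally, where $f'=0$, producing the fold. The one slip is the explicit choice $f(s)=e^{|s|}$: since both fillings are convex, their Liouville forms grow \emph{toward} the common boundary, so $f$ must have a maximum (not a minimum) at the fold --- e.g.\ an even profile agreeing with $Ke^{t+1}$ on the left collar, as in the paper --- after which your Wronskian condition flips sign to $f'g-fg'>0$; with that correction the argument goes through, and the ``error terms'' you anticipate in the transition region never appear because the entire interpolation takes place inside the collars where both contact forms are already the explicit product form.
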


A contact structure $\eta$ on a folded sum (constructed as in the above statement) will be called ``\textit{folded}''. But one should note that the defining ``\textit{folded}'' contact form $\sigma$ is an honest (smooth) contact form. 

\begin{theorem} \label{thm:Contact_Structure_on_Folded}
For $n\geq 3$, let $\pi_i: \mathcal{M}(\Sigma^{2n}_i,\phi_i) \longrightarrow S^1$ be any two contact mapping tori with the structure groups $\emph{Exact}(\Sigma_i,\partial \Sigma_i, d\beta_i)$ where $(\Sigma^{2n}_1, d\beta_1)$ and $(\Sigma^{2n}_2, d\beta_2)$ are Weinstein domains with the same convex boundary $Y$. Suppose that the coinciding induced contact forms $\beta_1|_{TY}=\beta_2|_{TY}$ on $Y$ have no contractible periodic Reeb orbit. Then the folded contact structure $\eta$ on the total space of their folded sum (constructed as in Theorem \ref{thm:Contact_Structure_on_Folded_Sum_0}) is tight.
\end{theorem}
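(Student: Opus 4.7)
The plan is to argue by contradiction: assume the folded contact structure $\eta$ is overtwisted and derive a contradiction using Niederkr\"uger's plastikstufe obstruction. In the higher-dimensional sense, overtwistedness of $\eta$ is equivalent to the existence of a plastikstufe $P\subset(\mathcal{M},\eta)$. The strategy is to produce a Bishop-type family of pseudo-holomorphic disks with boundary on $P$, show that this family stays compact by ruling out every SFT failure mode using the hypotheses, and then appeal to Niederkr\"uger's standard count to reach a topological contradiction. The key geometric observation is that the fold $Y\times S^1\subset\mathcal{M}$ is the natural separating hypersurface between the two contact mapping tori $\mathcal{M}_i=\mathcal{M}(\Sigma_i,\phi_i)$, and after a collar-adjustment provided by the bundle-theoretic construction of Theorem~\ref{thm:Contact_Structure_on_Folded_Sum_0}, it carries a stable Hamiltonian structure whose Reeb dynamics is essentially that of $\beta|_{TY}$ on $Y$, together with an inessential $S^1$-factor coming from the base of $\pi$.

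First I would set up the symplectization $(\R_s\times\mathcal{M},\,d(e^s\sigma))$ with a cylindrical almost complex structure $J$ that is compatible with the folded exact symplectic structure on each fiber, respects the decomposition of $\mathcal{M}$ into the two contact mapping tori, and is stretched along $Y\times S^1$. In dimension $2n+1\geq 7$ (i.e.\ $n\geq 3$) the standard Bishop family of small $J$-holomorphic disks with totally real boundary on $P$ exists and is transversally cut out, and one then tries to extend it to a maximal moduli space $\mathcal{W}$. Compactness of $\mathcal{W}$ is controlled by SFT compactness applied to the neck-stretched family. The possible failure modes are: sphere bubbling, ruled out by fiberwise exactness of $d\sigma$ from the Weinstein hypothesis on the pages $\Sigma_i$; disk bubbling on $P$, excluded by the local Niederkr\"uger model near the plastikstufe; and breaking along the fold, which would produce a non-trivial finite-energy curve in the symplectization of $Y\times S^1$ asymptotic to Reeb orbits. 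Because the Bishop disks are null-homotopic in $\mathcal{M}$, any such asymptotic orbit must project to a contractible Reeb orbit of $\beta|_{TY}$ in $Y$, contradicting the hypothesis. Hence no breaking occurs, $\mathcal{W}$ is compact, and Niederkr\"uger's obstruction yields the desired contradiction.

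The main obstacle is the analysis near the fold. Because the folded exact symplectic structure on fibers degenerates along $Y$, verifying that $Y\times S^1$ genuinely behaves as a ``neck'' for SFT purposes, with Reeb dynamics governed by $\beta|_{TY}$ up to the harmless $S^1$-factor, requires a careful choice of the contact form $\sigma$ in a collar of the fold. This is precisely where the bundle-theoretic normal form of Theorem~\ref{thm:Contact_Structure_on_Folded_Sum_0} becomes essential: it supplies a very explicit local model so that neck-stretching and the subsequent SFT compactness can be carried out. A second, more subtle issue is the homotopical bookkeeping needed to conclude that an asymptotic Reeb orbit of a limit curve descends to a contractible loop in $Y$ rather than merely in $\mathcal{M}$; this uses the fibration structure of $\pi$ together with the $\pi_1$-control coming from the Weinstein assumption on the pages, via the long exact sequence of the fibration and a Mayer--Vietoris argument across the fold. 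Once these are in place, the ``in particular'' tightness statement for a single contact mapping torus should follow by specializing $\Sigma_2$ to a Weinstein filling of $Y$, or equivalently by running the same Bishop-disk argument in a single mapping torus with neck-stretching performed along its boundary $Y\times S^1$.
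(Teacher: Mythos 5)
Your opening move (overtwisted $\Rightarrow$ plastikstufe via \cite{CMP}) matches the paper, but from there you diverge: you propose to re-run the Bishop-family/SFT analysis yourself, with a neck stretched along the fold $Y\times S^1$, whereas the paper uses the plastikstufe only to invoke Albers--Hofer \cite{AH} as a black box --- every contact form for an overtwisted (plastikstufe-carrying) structure has a \emph{contractible closed Reeb orbit} --- and then works entirely with soft contact-topological tools: contact vector fields and general position. Concretely, the paper takes the resulting contractible orbit of $R_\sigma$ together with a bounding disk, pushes both off a fiber $\pi^{-1}(\theta_0)$, uses the fact that the Weinstein cores of the pages are at most half-dimensional (Theorem \ref{thm:Top_Charac_of_Weinstein_manifolds}) to push the orbit \emph{and its disk} off the cores by a dimension count requiring $n\geq 3$ (Lemma \ref{lem:contractible_transverse}), and then flows along a contact vector field built from the fiberwise Liouville fields to drive orbit and disk onto the fold $\{t=0\}\times Y\times\{z=0\}$, where the contact form is a constant multiple of $\alpha$; this contradicts the hypothesis on $\alpha$. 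No holomorphic curves beyond the citation of \cite{AH} are used.

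The genuine gap in your proposal is that your compactness analysis only accounts for ``breaking along the fold.'' After neck-stretching, the SFT limit of the Bishop family can just as well have a top-level component asymptotic to a closed orbit of $R_\sigma$ \emph{away from} the fold: in $\mathrm{int}(\mathcal{M}_i)$ the Reeb field is $\pm(1/K)\partial_\theta$ (closed orbits over periodic points of $\phi_i$), and in the collar it is $a(t)R_\alpha+b(t)\partial_\theta$ with $b(t)\neq 0$ for $t\neq 0$. The hypothesis of the theorem says nothing about these orbits, so your argument as stated does not exclude this failure mode. One \emph{can} close this hole by a homotopy-class analysis --- any closed orbit of $R_\sigma$ with nonzero $\theta$-winding is non-contractible in $\mathcal{M}$ since $\pi_*$ detects it, and a closed orbit at $t=0$ is an orbit of $R_\alpha$ which, for Weinstein pages of dimension $2n\geq 6$, is contractible in $\mathcal{M}$ only if contractible in $Y$ (the inclusions $Y\hookrightarrow\Sigma_i$ are $\pi_1$-isomorphisms because the co-handles have index $\geq n\geq 3$, and $\pi_1(\mathrm{fiber})\to\pi_1(\mathcal{M})$ is injective) --- but you only gesture at this ``homotopical bookkeeping'' without carrying it out, and it is precisely the load-bearing step. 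Relatedly, your only use of the Weinstein hypothesis is to rule out sphere bubbling via exactness; in the paper that hypothesis (together with $n\geq 3$) is used for the general-position argument against the half-dimensional cores, which is where the dimension restriction actually enters. Finally, the hard-analysis route also needs nondegeneracy/Morse--Bott control of the neck Reeb orbits that the hypotheses do not provide; the soft route via \cite{AH} avoids all of this. If you make the homotopy-class analysis of \emph{all} closed $R_\sigma$-orbits explicit, you actually get a cleaner proof (no contractible closed Reeb orbit at all, hence no plastikstufe by \cite{AH}), but as written the argument is incomplete.
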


One should note that the ideas and arguments used in the proof of the above theorems can be extended to obtain similar results for the ``folded multi-sum'' of any number of contact mapping tori (which are glued along their contactomorphic boundary components). As an immediate consequence of the last theorem, we have 

\begin{corollary} \label{thm:Tight_Contact_Structure_on_mapping_Tori}
	For $n\geq 3$, let $\pi: \mathcal{M}(\Sigma^{2n},\phi) \longrightarrow S^1$ be any contact mapping torus with the structure group $\emph{Exact}(\Sigma,\partial \Sigma, d\beta)$ where $(\Sigma^{2n}, d\beta)$ is a Weinstein domain with the convex boundary $(Y=\partial \Sigma,\emph{Ker}(\beta|_{TY}))$. If the induced contact form $\beta|_{TY}$ has no contractible periodic Reeb orbit, then the bundle contact structure $\eta$ on $\mathcal{M}(\Sigma^{2n},\phi)$ (constructed as in Theorem \ref{thm:Contact_form_mapping _tori}) is tight. \qed
\end{corollary}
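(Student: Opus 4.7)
The natural strategy is to deduce the corollary from Theorem \ref{thm:Contact_Structure_on_Folded} by realizing the given contact mapping torus $\M$ as a codimension-zero contact submanifold of a folded sum to which that theorem applies. Concretely, I would pair $\M$ with a second contact mapping torus sharing the same convex boundary data -- the simplest choice being the trivial contact mapping torus $\mathcal{M}(\Sigma, \textrm{id}_\Sigma) = \Sigma \times S^1$ equipped with its product bundle contact structure, whose induced boundary contact form equals $\beta|_{TY}$ -- and form their folded sum $\mathcal{M}$ as in Theorem \ref{thm:Contact_Structure_on_Folded_Sum_0}.

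With this setup the argument breaks into three steps. First, by Theorem \ref{thm:Contact_Structure_on_Folded_Sum_0}, $\mathcal{M}$ carries a folded contact structure $\eta_f = \textrm{Ker}(\sigma)$. Second, by the bundle-theoretical construction of $\sigma$ described in Section \ref{sec:Contact_Structures_on_Folded_Sums}, the folded contact form agrees with the original bundle contact forms on each summand outside a small collar neighborhood of the fold; in particular $\eta_f$ restricts on the $\M$ summand to exactly the bundle contact structure $\eta$ of the corollary. Third, since both summands induce the same contact form $\beta|_{TY}$ on the common fold $Y$ and this form has no contractible Reeb orbit by hypothesis, Theorem \ref{thm:Contact_Structure_on_Folded} applies and $\eta_f$ is tight.

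It then suffices to invoke the general fact that tightness passes to codimension-zero contact submanifolds: an embedded overtwisted disk in $\M$ would embed into $\mathcal{M}$, contradicting tightness of $\eta_f$. Hence $\eta$ is tight, completing the proof.

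The one step that requires genuine care is the second: one must verify that the gluing modification near the fold, inherent in the construction of $\sigma$, does not leak into the interior of either summand, so that the restriction $\eta_f|_\M$ really coincides with the bundle contact structure $\eta$ on $\M$. This amounts to tracking the collar normalization near $\partial \Sigma$ performed in Section \ref{sec:Contact_Structures_on_Folded_Sums}, and it should follow immediately from that construction; the rest of the proof is then essentially a one-line invocation of Theorem \ref{thm:Contact_Structure_on_Folded}.
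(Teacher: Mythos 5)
Your proposal is correct and is exactly the argument the paper intends: the corollary is stated as an immediate consequence of Theorem \ref{thm:Contact_Structure_on_Folded} (no separate proof is written), obtained by folding $\mathcal{M}(\Sigma,\phi)$ with a second mapping torus having the same Weinstein fiber boundary data, observing that the folded contact form restricts on $\textrm{int}(\mathcal{M}(\Sigma,\phi))$ to a bundle contact form (with the constant $K$ possibly enlarged, which changes the structure only by an isotopy), and noting that an overtwisted disk in the summand would persist in the tight closed folded sum. Your flagged ``step two'' is indeed harmless, since the gluing modification is confined to the collar $(-1-\epsilon,1+\epsilon)\times Y\times S^1$ attached along $\partial\mathcal{M}(\Sigma,\phi)$.
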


%
%

\section{Bundle Contact Structures on Contact Mapping Tori} \label{sec:Contact_Structures_on_Mapping_Tori}

Any contact mapping torus admits a structure of compact contact symplectic fibration over $S^1$ \cite{Ari}. Indeed, one can construct ``compatible'' contact structures on such fibrations using their bundle structures. In precise words, we have:

\begin{definition} [\cite{Ari}] \label{def:compatibility}
Let $\pi:\M \to S^1$ be a mapping torus with $\phi \in\textrm{Diff}\,^+(\Sigma,\partial \Sigma)$, and suppose that $\M$ admits a co-oriented contact structure $\eta$. Then $\eta$ is said to be \emph{compatible with} $\pi$ if there is a contact form $\sigma$ for $\eta$ such that $d\sigma$ restricts to an exact symplectic form on every fiber of $\pi$.
\end{definition}

\begin{theorem} \label{thm:Contact_form_mapping _tori}
For $n\geq 1$, let $\pi:\mathcal{M}(\Sigma^{2n},\phi) \longrightarrow S^1$ be a contact mapping torus with the structure group $\emph{Exact}(\Sigma,\partial \Sigma, d\beta)$. Then the total space $\M$ admits a (co-oriented) contact structure $\eta=\emph{Ker}(\sigma)$ which is compatible with $\pi$, that is, $d\sigma$ restricts to an exact symplectic form on every fiber. Moreover, the contact form $\sigma$ is equal to a product form on some collar neighborhood of the boundary $\partial \M\cong \partial \Sigma \times S^1$.
\end{theorem}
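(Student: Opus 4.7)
The plan is to construct $\sigma$ explicitly as a $1$-form on $\Sigma \times [0,1]$ that descends smoothly to the mapping torus $\mathcal{M}(\Sigma,\phi)=\Sigma\times[0,1]/(x,0)\sim(\phi(x),1)$. Since $\phi$ is an exact symplectomorphism of $(\Sigma,d\beta)$, there is $h\in C^{\infty}(\Sigma)$ with $\phi^{*}\beta-\beta=dh$, and since $\phi$ is the identity on a collar of $\partial\Sigma$, we may normalize $h\equiv 0$ there. The naive candidate $\beta+K\,dt$ fails to descend precisely because of this $dh$; the whole point is to absorb $dh$ with a $t$-dependent interpolation.

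Fix a smooth cutoff $g\colon[0,1]\to[0,1]$ with $g\equiv 1$ on a neighborhood of $0$ and $g\equiv 0$ on a neighborhood of $1$, so in particular \emph{all} derivatives of $g$ vanish at both endpoints, and set
\[
\tilde\sigma \;=\; \beta + g(t)\,dh + K\,dt \qquad \text{on } \Sigma\times[0,1],
\]
where $K>0$ is to be chosen. Writing $\iota_{t}\colon\Sigma\hookrightarrow\Sigma\times\{t\}$, one computes $\iota_{0}^{*}\tilde\sigma=\beta+dh=\phi^{*}\beta$ and $\iota_{1}^{*}\tilde\sigma=\beta$, so the descent condition $\phi^{*}(\iota_{1}^{*}\tilde\sigma)=\iota_{0}^{*}\tilde\sigma$ holds tautologically, and flatness of $g$ at the endpoints will then upgrade this to a $C^{\infty}$-matching across the seam.

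For the main verifications, $d\tilde\sigma=d\beta+g'(t)\,dt\wedge dh$, so on each fiber $\Sigma\times\{t\}$ the form $d\tilde\sigma$ restricts to the exact symplectic $d\beta$, giving compatibility with $\pi$. Expanding $\tilde\sigma\wedge(d\tilde\sigma)^{n}$, any contribution of shape $\beta\wedge(d\beta)^{n}$ or $dh\wedge(d\beta)^{n}$ vanishes for degree reasons on the $2n$-dimensional fiber, and any term with $dt\wedge dt$ or $dh\wedge dh$ drops out; what remains is
\[
\tilde\sigma\wedge(d\tilde\sigma)^{n} \;=\; dt\wedge\bigl[\,K\,(d\beta)^{n} + n\,g'(t)\,\beta\wedge dh\wedge(d\beta)^{n-1}\,\bigr].
\]
By compactness of $\Sigma$ there is a uniform constant $C>0$ bounding the second term in units of the symplectic volume $(d\beta)^{n}$, so any choice $K>C$ makes $\tilde\sigma$ a positive contact form. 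Near $\partial\Sigma$ we have $dh=0$, hence $\tilde\sigma=\beta+K\,dt$ on a collar of $\partial\Sigma\times[0,1]$, which descends to the genuine product $\beta+K\,d\theta$ on a collar of $\partial\mathcal{M}\cong\partial\Sigma\times S^{1}$, yielding the final clause of the theorem.

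The step I expect to demand the most care is the $C^{\infty}$ gluing across the seam. To check it, one writes a chart $(y,s)\in\Sigma\times(-\delta,\delta)$ through a seam point by declaring $(y,s)\leftrightarrow(y,s)$ for $s\geq 0$ and $(y,s)\leftrightarrow(\phi(y),1+s)$ for $s<0$; pulling $\tilde\sigma$ back through this chart gives $\beta+g(s)\,dh+K\,ds$ on the $s\geq 0$ side and $\beta+dh+g(1+s)\,d(h\circ\phi)+K\,ds$ on the $s<0$ side. These agree at $s=0$, and because every derivative of $g$ at $0$ and at $1$ vanishes, every $s$-derivative of the two expressions at $s=0$ also agrees (both are zero). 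This is exactly where the flat-endpoint assumption on $g$ is indispensable: merely continuous matching of endpoint values would yield only a $C^{0}$ form. Once this smoothness is secured, the rest is bookkeeping, and the bundle-theoretical flavor of the construction is visible in viewing $\beta+g(t)\,dh$ as a globally well-defined fiberwise primitive of $d\beta$ and $K\,dt$ as a Reeb-direction term whose size dominates the defect of this choice.
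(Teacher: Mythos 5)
Your argument is correct, but it is not the route the paper takes: you reprove the theorem by the classical Thurston--Winkelnkemper/Giroux method, writing $\phi^*\beta-\beta=dh$ and absorbing the exactness defect with a cutoff, $\tilde\sigma=\beta+g(t)\,dh+K\,dt$ on $\Sigma\times[0,1]$, whereas the paper works bundle-theoretically and never introduces $h$ at all. It covers $S^1$ by trivializing arcs $U_\rho$, pulls back the single fixed primitive $\beta$ through each trivialization to get $\lambda_\rho=p_2^*(\beta)$, averages with a partition of unity $\{f_\rho\}$ on $S^1$ to obtain a global $1$-form $\lambda_\beta$ restricting to $(\varphi_\rho(\theta))^*(\beta)$ on every fiber, and then sets $\sigma=\lambda_\beta+K\pi^*(d\theta)$ for $K$ large. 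The seam-smoothness issue you rightly single out as the delicate step simply does not arise in the paper's version; it is replaced by the (automatic) smoothness of the partition-of-unity average, with the transition maps lying in $\mathrm{Exact}(\Sigma,\partial\Sigma,d\beta)$ guaranteeing that the average is still fiberwise symplectic and equal to $\beta$ on a collar. Your construction buys explicitness and a concrete fiberwise primitive $\beta+g(t)\,dh$; the paper's buys a construction that generalizes verbatim to contact symplectic fibrations over other bases and, as the author stresses, adapts more directly to the folded-sum gluing of Sections 3 and 4. The contact-condition verification is the same large-$K$ estimate in both. Two small points in your write-up: the normalization $h\equiv 0$ near $\partial\Sigma$ need not be achievable when the collar is disconnected ($h$ is only adjustable by one global constant), but it is also never used, since $dh=0$ near $\partial\Sigma$ already gives the product form on the collar; and the sign of the term $n\,g'(t)\,\beta\wedge dh\wedge(d\beta)^{n-1}$ in your volume-form expansion is immaterial because you only bound its absolute value against $(d\beta)^n$.
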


\begin{proof}
Let $(\Sigma,d\beta)$ be any fixed fiber of $\pi$. Then the transition maps are contained in the structure group $\textrm{Exact}(\Sigma,\partial \Sigma, d\beta)$. Since $\pi:\M \longrightarrow S^1$ is locally trivial fiber bundle, $\M$ is constructed by patching the trivial $\Sigma$-bundles over open arcs in $S^1$. Therefore, there is an open cover $\{U_\rho\}_\rho$ of $S^1$ and diffeomorphisms $\varphi_\rho: \pi^{-1}(U_\rho) \to U_\rho \times \Sigma$ satisfying $$\pi|_{\pi^{-1}(U_\rho)}= p_1 \circ \varphi_\rho$$
(where $p_1:U_\rho \times \Sigma \longrightarrow U_\rho$ is the projection) such that the collection $\{U_\rho,\varphi_\rho\}_\rho$ trivializes $\pi$. For each $\rho$ by restricting $\varphi_\rho$ to each fiber $\Sigma_\theta:=\pi^{-1}(\theta)$ over $U_\rho$ and then projecting onto $\Sigma$-factor, we obtain a smooth map $\varphi_\rho(\theta):\Sigma_\theta \to \Sigma$, and as a result, a pullback form 
\begin{center}
$\lambda_\theta:=(\varphi_\rho(\theta))^*(\beta).$
\end{center}
Note that $(\Sigma_\theta,d\lambda_\theta)$ is a compact exact symplectic manifold for each $\theta \in U_\rho \subset S^1$. Also using the projection $p_2:U_\rho \times \Sigma \longrightarrow \Sigma$ onto the second factor, we have the pullback form 
\begin{center}
$\lambda_\rho:=p_2^*(\beta) \in \Omega^1(U_\rho \times \Sigma).$
\end{center}
Hence, we obtain a $1$-form $\phi_\rho^*(\lambda_\rho) \in \Omega^1(\pi^{-1}(U_\rho))$. By construction, $\phi_\rho^*(\lambda_\rho)|_{\Sigma_\theta}=\lambda_\theta$ for each $\theta \in U_\rho$. Let $f_\rho:S^1 \to [0,1]$ be a partition of unity subordinating to $\{U_\rho\}_\rho$ of $S^1$. Set
$$\lambda_\beta:=\sum_{\rho}(f_\rho \circ \pi)\varphi_\rho^*(\lambda_\rho) \;\in \Omega^1(\M).$$
If $\iota_\theta: \Sigma_\theta \longrightarrow \M$ is the inclusion map of the fiber over $\theta$, then we compute
$$\iota_\theta^*(\lambda_\beta)=\iota_\theta^*\left(\sum_{\rho}(f_\rho \circ \pi)\varphi_\rho^*(\lambda_\rho)\right)=
										  \sum_{\rho}(f_\rho \circ \pi)\iota_\theta^*(f_\rho^*(\lambda_\rho))=\lambda_\theta,$$
that is, $\lambda_\beta$ restricts to $\lambda_\theta$ on each fiber $\Sigma_\theta$, and so $d\lambda_\beta=d\lambda_\theta$ is nondegenerate on each $\Sigma_\theta$. In fact, $d\lambda_\beta\in \Omega^2(\M)$ is nondegenerate (symplectic) on all vertical subspaces
$$\textrm{Vert}_x:=\textrm{Ker}(D\pi(x))=T_x\Sigma_\theta\subset T_x\M, \quad \textrm{for } \theta \in \Sigma, \textrm{ and for } x \in \pi^{-1}(\theta)$$
where $D\pi(x)$ denotes the derivative map of $\pi$ at $x$.
Therefore, at each $x\in \M$, we have the horizontal space (the symplectic orthogonal complement of $\textrm{Vert}_x$ in $T_x\M$):
$$\textrm{Hor}_x:=\{u\in T_x\M\,:\, d\lambda_\beta(u,v)=0, \; \forall v\in \textrm{Vert}_x\}\subset T_x\M.$$
At each $x\in \Sigma_\theta$, since the restriction $d\lambda_\theta=d\lambda_\beta|_{\Sigma_\theta}$ is of full rank and the derivative map $D\pi(x):T_x\M=\textrm{Vert}_x \oplus \textrm{Hor}_x \to T_\theta S^1$ vanishes on $\textrm{Vert}_x$, we conclude that $D\pi(x)|_{\textrm{Hor}_x}:\textrm{Hor}_x \to T_\theta S^1$ is an isomorphism. Now for any $K>0$, consider the $1$-form
$\sigma\in \Omega^1(\M)$ defined by
$$\sigma:=\lambda_\beta+K\pi^*(d\theta).$$
Then noting that $\textrm{dim}(\Sigma)=2n$, we compute
\begin{eqnarray*}
\sigma \wedge (d\sigma)^{n}&=& (\lambda_\beta +K\pi^*(d\theta)) \wedge (d\lambda_\beta +K\pi^*(d^2\theta))^{n}=(\lambda_\beta +K\pi^*(d\theta)) \wedge (d\lambda_\beta )^{n} \\
&=& \lambda_\beta \wedge (d\lambda_\beta)^{n}+ K \pi^*(d\theta) \wedge (d\lambda_\beta)^n.
\end{eqnarray*}
Observe that at each $x \in \M$, the form $\sigma \wedge (d\sigma)^{n}|_x$ is a positive volume form on the tangent space $T_x\M$ provided $K=K_x>0$ is chosen large enough. Since $\M$ is compact, there exists a sufficiently large $K>0$ such that $\sigma \wedge (d\sigma)^{n}|_x$ is a positive volume form on $T_x\M$ for all $x\in \M$. Equivalently, for any large enough  $K>0$, $\sigma$ is a global contact form and defines a (co-oriented) contact structure $\eta=\textrm{Ker}(\sigma)$ on $\M$.

Moreover, for each $\theta\in S^1$ we have
$$\iota_\theta^*(\sigma)=\iota_\theta^*(\lambda_\beta +K\pi^*(d\theta))=\iota_\theta^*(\lambda_\beta)+ K(\pi \circ \iota_\theta)^*(d\theta)=\iota_\theta^*(\lambda_\beta)+0=\lambda_\theta$$ which means, by definition, that $\eta$ is compatible with $\pi$ as required.

\medskip
Finally, observe that since the structure group of $\pi:\M \to S^1$ is $\textrm{Exact}(\Sigma,\partial \Sigma, d\beta)$ (which is a subgroup of $\textrm{Diff}\,^+(\Sigma,\partial \Sigma)$) and $S^1$ is compact, there exists $\epsilon>0$ such that a collar neighborhood $N$ of $\partial \M \cong \partial \Sigma \times S^1$ has an identification $N=(-\epsilon,0] \times \partial \Sigma \times S^1$. Moreover, the $1$-form $\lambda_\beta$ is constructed in such a way that it restricts to $\lambda_\theta$ on each fiber $\Sigma_\theta$. But all these $\lambda_\theta$'s are obtained by pulling back the same $\beta$ fixed at the beginning. Hence, on each collar neighborhood $N \cap \Sigma_\theta$, $\lambda_\theta$ (and so $\lambda_\beta$) restricts to $\beta$. Equivalently, on the collar neighborhood $N$, $\sigma$ is the product form $\beta+Kd\theta$ as claimed.

\end{proof}

\begin{definition}[\cite{Ari}] The contact form $\sigma$ on $\mathcal{M}(\Sigma,\phi)$ constructed in the above proof will be called a \textit{bundle contact form}, and its kernel $\eta$ will be called a \textit{bundle contact structure}.
\end{definition}

\begin{remark} \label{rmk:new_and_old_structures_isotopic}
One can see that the mapping torus $\pi:\M\longrightarrow S^1$ together with the bundle contact structure $\eta=\textrm{Ker}(\sigma)$ is, in fact, contactomorphic to the contact manifold $(\mathcal{M}'(\Sigma,\phi),\eta'=\textrm{Ker}(\sigma'))$ constructed in \cite{TW,Gi,Gi2} where $$\mathcal{M}'(\Sigma,\phi)=\Sigma \times [0,1] / \sim \; ,\qquad (x,0)\sim (\phi(x),1)$$ and the identification map of ``$\sim$'' is directly used in the construction of $\sigma'$. 

\medskip
To verify this claim, first close up the contact manifolds $(\M,\eta), (\mathcal{M}'(\Sigma,\phi),\eta')$ by gluing two copies of $\partial \Sigma \times D^2$ equipped with suitable contact structures which can be smoothly glued to $\eta$ and $\eta'$, respectively. This is just the standard procedure in obtaining contact open books from contact mapping tori used in \cite{TW,Gi,Gi2}. Therefore, we obtain two isomorphic contact open books $\mathcal{OB}:=\mathcal{OB}(\Sigma,\phi)$, $\mathcal{OB}':=\mathcal{OB}'(\Sigma,\phi)$ on the closed contact manifolds, say $$(M,\xi=\textrm{Ker}(\alpha)),\;\; (M',\xi'=\textrm{Ker}(\alpha')),$$ respectively, where $\alpha$ (resp. $\alpha'$) is obtained from $\sigma$ (resp. $\sigma'$) by (smoothly) gluing a suitable contact form on $\partial \Sigma \times D^2$. Note that the open books $\mathcal{OB},\mathcal{OB}'$ support $\xi,\xi'$, respectively, and since they are isomorphic, there exists a fiber preserving diffeomorphism $f:M \to M'$, that is, taking the fibers of $\mathcal{OB}$ onto those of $\mathcal{OB}'$ and mapping any (trivial) neighborhood of the binding of $\mathcal{OB}$ to that of $\mathcal{OB}'$. Observe that the pullback form $\alpha^*:=f^*(\alpha')$ defines a contact structure $\xi^*=\textrm{Ker}(\alpha^*)$ which is also supported by $\mathcal{OB}$. Therefore, by Giroux correspondence, there exists an isotopy $\{\xi_t | t\in [0,1]\}$ of contact structures on $M$ between $\xi_0=\xi$ and $\xi_1=\xi^*$. In particular, there is a contactomorphism $$g:(M,\xi)\to (M,\xi^*).$$ Moreover, by the construction of $\xi^*$, the map $f$ is a contactomorphism from $(M,\xi^*)$ onto $(M',\xi')$, so their composition defines a contactomorphism $f\circ g:(M,\xi)\to (M',\xi')$. Finally, by considering the restriction of $g$ between the complements of the suitable neighborhoods of the bindings of $\mathcal{OB}$ and $\mathcal{OB}'$, one can obtain the desired contactomorphism between $(\M,\eta)$ and $(\mathcal{M}'(\Sigma,\phi),\eta')$. \qed
\end{remark}

%
%

\section{Folded Contact Structures on Folded Sums of Mapping Tori} \label{sec:Contact_Structures_on_Folded_Sums}

For a given mapping torus $\pi:\M \longrightarrow S^1$ with $\phi \in \textrm{Diff}\,^+(\Sigma,\partial \Sigma)$, the boundary  $$\partial \M=\bigcup_{\theta \in S^1} \partial \pi^{-1}(\theta)\cong \partial \Sigma \times S^1$$ has a trivial neighborhood which is diffeomorphic to a product $(-\epsilon, 0] \times \partial \Sigma \times S^1.$ This makes the following construction possible where $\overline{\Sigma}$ denotes $\Sigma$ with opposite orientation.

\begin{definition} \label{def:Folded_Sum_of_Mapping_Tori}
Let $\pi_i:\mathcal{M}(\Sigma_i,\phi_i) \longrightarrow S^1$ \ (for $i=1,2$) be two mapping tori with $\phi_i \in \textrm{Diff}\,^+(\Sigma_i,\partial \Sigma_i)$ such that $\partial \Sigma_1=Y=\partial \Sigma_2$. Then the \textit{folded sum} of $\pi_1$ and $\pi_2$ is a smooth fiber bundle $\pi: \mathcal{M}(\Sigma_1\cup_Y \overline{\Sigma}_2,\tilde{\phi}_1 \circ \tilde{\phi}_2) \longrightarrow S^1$ constructed as follows:
\begin{itemize}
\item[(i)] The total space of $\pi$ is a smooth manifold with the identification
\begin{center}
$\mathcal{M}(\Sigma_1\cup_Y \overline{\Sigma}_2,\tilde{\phi}_1 \circ \tilde{\phi}_2)=(\Sigma_1\cup_Y \overline{\Sigma}_2) \times [0,1] / \sim$
\end{center}
where $(x,0)\sim (\tilde{\phi}_1(x),1)$ for any $x\in \Sigma_1$ and $(y,0)\sim (\tilde{\phi}_2(y),1)$ for any $y\in \Sigma_2$. Here $\tilde{\phi}_1$ (resp. $\tilde{\phi}_2$) is the extension of $\phi_1$ (resp. $\phi_2$) via identity over $\Sigma_2$ (resp. $\Sigma_1$).

\item[(ii)] The fiber of $\pi$ over $\theta \in S^1$ is the boundary sum of the fiber of $\pi_1$ over $\theta$ with the corresponding one of $\pi_2$ with the orientation reversed, that is,
\begin{center}
$\pi^{-1}(\theta)=\pi_1^{-1}(\theta)\cup_{\partial}\overline{\pi_2^{-1}(\theta)}\cong \Sigma_1\cup_Y \overline{\Sigma}_2.$
\end{center}
\item[(iii)] The monodromy of $\pi$ is the composition $\tilde{\phi}_1 \circ \tilde{\phi}_2$ which lies in the group
\begin{center}
$\textrm{Diff}\,^+(\Sigma_1\cup_Y \overline{\Sigma}_2,Y):=\{\phi\in \textrm{Diff}\,^+(\Sigma_1\cup_Y \overline{\Sigma}_2) \;| \; \phi \,\textrm{ is identity near }\, Y \}$
\end{center}
which is, indeed, the structure group of $\pi$.
\end{itemize}
\end{definition}

If we further assume that $\Sigma_1, \Sigma_2$ are compact exact symplectic manifolds with coinciding convex boundaries, and if each $\phi_i$ is chosen to be exact symplectomorphism of $\Sigma_i$ (which is identity near boundary), then one can construct (see Theorem \ref{thm:Contact_Structure_on_Folded_Sum}) a contact structure $\eta$ on the folded sum (defined as in Definition \ref{def:Folded_Sum_of_Mapping_Tori}). The contact form, say $\sigma$, on the folded sum (defining $\eta$) is obtained by gluing the ones on the contact mapping tori $\mathcal{M}(\Sigma_i,\phi_i)$ constructed in the proof of Theorem \ref{thm:Contact_form_mapping _tori}. The resulting contact structure is ``compatible'' with the fiber bundle structure on the folded sum in the sense that $d\sigma$ defines a \textit{folded symplectic structure} when restricted to each fiber $\Sigma_1 \cup_{\partial}\overline{\Sigma}_2$. More precisely,

\begin{definition} \label{def:compatibility_folded}
Let $\pi: \mathcal{M}(\Sigma_1\cup_Y \overline{\Sigma}_2,\tilde{\phi}_1 \circ \tilde{\phi}_2) \longrightarrow S^1$ be any folded sum of two mapping tori (as in Definition \ref{def:Folded_Sum_of_Mapping_Tori}) such that $\mathcal{M}(\Sigma_1\cup_Y \overline{\Sigma}_2,\tilde{\phi}_1 \circ \tilde{\phi}_2)$ admits a co-oriented contact structure $\eta$. Then $\eta$ is said to be \emph{compatible with} $\pi$ if there exists a contact form $\sigma$ for $\eta$ such that $d\sigma$ restricts to a folded exact symplectic form on every fiber of $\pi$.
\end{definition}

\begin{theorem} \label{thm:Contact_Structure_on_Folded_Sum}
For $n\geq 1$, let $\pi: \mathcal{M}(\Sigma_1\cup_Y \overline{\Sigma}_2,\tilde{\phi}_1 \circ \tilde{\phi}_2) \longrightarrow S^1$ be any folded sum of two contact mapping tori with the structure groups $\emph{Exact}(\Sigma_i,\partial \Sigma_i, d\beta_i)$ such that $(\Sigma^{2n}_1, d\beta_1)$ and $(\Sigma^{2n}_2, d\beta_2)$ have the same convex boundary $Y$. Then the total space $\mathcal{M}(\Sigma_1\cup_Y \overline{\Sigma}_2,\tilde{\phi}_1 \circ \tilde{\phi}_2)$ admits a (co-oriented) contact structure $\eta=\emph{Ker}(\sigma)$ which is compatible with $\pi$.
\end{theorem}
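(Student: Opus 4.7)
The plan is to build $\sigma$ by gluing the bundle contact forms on the two mapping tori across the fold using an auxiliary interpolation inside a tubular neighborhood of the fold $Y\times S^1$.

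First I apply Theorem \ref{thm:Contact_form_mapping _tori} to each contact mapping torus $\pi_i:\mathcal{M}(\Sigma_i,\phi_i)\to S^1$, obtaining a bundle contact form $\sigma_i$ which, on a collar $N_i\cong(-\epsilon,0]\times Y\times S^1$ of $\partial\mathcal{M}(\Sigma_i,\phi_i)$, equals the product form $\beta_i+K_i\,d\theta$ for some large constant $K_i>0$. The convexity of $Y=\partial\Sigma_i$ lets me write $\beta_i=e^t\alpha$ on this collar in the coordinate $t\in(-\epsilon,0]$, where $\alpha:=\beta_1|_{TY}=\beta_2|_{TY}$ is the common contact form on $Y$. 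In the folded sum $\mathcal{M}$, the two collars merge into a tubular neighborhood $N\cong(-\epsilon,\epsilon)_u\times Y\times S^1$ of the fold by setting $u:=t$ on the $\mathcal{M}(\Sigma_1,\phi_1)$ side and $u:=-t$ on the $\overline{\mathcal{M}(\Sigma_2,\phi_2)}$ side (reflecting the orientation reversal), so that the Liouville form of $\Sigma_2$ near $Y$ becomes $e^{-u}\alpha$ in the single coordinate $u$.

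I then define $\sigma:=h(u)\,\alpha+g(u)\,d\theta$ on $N$ and $\sigma:=\sigma_i$ elsewhere, where $h,g:(-\epsilon,\epsilon)\to\R$ are smooth auxiliary functions satisfying: (i) $h(u)=e^u$, $g(u)=K_1$ for $u$ near $-\epsilon$, and $h(u)=e^{-u}$, $g(u)=-K_2$ for $u$ near $+\epsilon$ (the sign flip of the $d\theta$-coefficient matches the reversed coorientation on $\overline{\mathcal{M}(\Sigma_2,\phi_2)}$); (ii) $h>0$ everywhere with $h'(0)=0$ and $h''(0)<0$, so that $h$ attains a nondegenerate maximum at the fold; (iii) $\psi:=g/h$ is strictly monotonic on $(-\epsilon,\epsilon)$. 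To produce such $h,g$, first fix any smooth positive $h$ with the endpoint behavior of (i) and the critical structure of (ii), then choose $\psi$ smooth and strictly decreasing with matching boundary expressions $\psi(u)=K_1e^{-u}$ near $-\epsilon$ and $\psi(u)=-K_2e^u$ near $+\epsilon$ (both of which are strictly decreasing in $u$), and set $g:=h\psi$.

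For the contact condition, a direct expansion on $N$ using $(d\alpha)^n\equiv 0$ (since $\alpha$ is pulled back from the $(2n-1)$-dimensional $Y$) yields
\[
\sigma\wedge(d\sigma)^n \;=\; n\,h^{n-1}\bigl(hg'-gh'\bigr)\,du\wedge d\theta\wedge\alpha\wedge(d\alpha)^{n-1} \;=\; n\,h^{n+1}\psi'\,du\wedge d\theta\wedge\alpha\wedge(d\alpha)^{n-1},
\]
which is a nowhere-vanishing volume form by (iii); off $N$, the same expression is the positive volume form furnished by Theorem \ref{thm:Contact_form_mapping _tori}. Hence $\sigma$ is a global contact form on $\mathcal{M}$. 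For compatibility with $\pi$, $d\sigma$ restricts on each fiber near the fold to $d(h(u)\alpha)=h'\,du\wedge\alpha+h\,d\alpha$, whose top power $n\,h^{n-1}h'\,du\wedge\alpha\wedge(d\alpha)^{n-1}$ vanishes exactly along $\{u=0\}$ and does so transversally by (ii); away from the fold it agrees with the original exact symplectic form $d\beta_i$. Thus $d\sigma|_{\textrm{fiber}}$ is a folded exact symplectic form, and $\eta:=\ker\sigma$ is compatible with $\pi$ in the sense of Definition \ref{def:compatibility_folded}.

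The main obstacle is reconciling (ii) with (iii). Because $h'(0)=0$, the contact condition at the fold collapses to $g'(0)\neq 0$, and globally it amounts to the nonvanishing of the Wronskian $hg'-gh'$. This succeeds only because reversing the orientation of the second mapping torus forces $g$ to change sign across the fold, rendering $\psi=g/h$ monotonically decreasing throughout $N$; if the two coorientations had agreed on either side, $\psi$ would be forced into opposing monotonicities at the two ends of $N$ and no smooth monotonic interpolation could exist.
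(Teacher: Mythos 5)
Your construction is essentially the paper's own proof: both glue the bundle contact forms of Theorem \ref{thm:Contact_form_mapping _tori} across a neighborhood of the fold via an interpolating form $f\alpha+g\,d\theta$ whose $\alpha$-coefficient has a unique (fold) critical point and whose Wronskian condition $f'g-fg'\neq 0$ (your monotonicity of $\psi=g/h$, which you should state as $\psi'<0$ everywhere, since strict monotonicity alone permits $\psi'=0$ at isolated points) yields the contact condition, with the sign change of $g$ across the fold playing exactly the role you identify in your closing paragraph. The one slip is that ``$\sigma:=\sigma_i$ elsewhere'' must be read as $\sigma:=\lambda_{\beta_2}-K_2\pi_2^*(d\theta)$ on the $\overline{\mathcal{M}(\Sigma_2,\phi_2)}$ side (consistent with your endpoint value $g=-K_2$), exactly as in the paper's formula for $\sigma$.
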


\begin{proof} Let $\pi_i:\mathcal{M}(\Sigma_i,\phi_i) \longrightarrow S^1$ \ (for $i=1,2$) be the contact mapping tori which construct $\pi$. Fix a fiber $(\Sigma_i, d\beta_i)$ of $\pi_i$ for each $i$. By assumption $(\Sigma_1, d\beta_1)$ and $(\Sigma_2, d\beta_2)$ are both exact symplectic fillings of their common convex boundary, say $(Y,\xi)$. Without loss of generality, assume $\xi=\textrm{Ker}(\alpha)$ where $\beta_1|_{TY}=\alpha=\beta_2|_{TY}$, and so for each $i$ we have $\beta_i=e^t\alpha$ on some collar neighborhood $(-\epsilon,0]\times Y$ of $Y=\partial \Sigma_i$ in $\Sigma_i$.

\medskip
By Theorem \ref{thm:Contact_form_mapping _tori}, there exists a bundle contact structure $\eta_i=\textrm{Ker}(\sigma_i)$ on each total space $\mathcal{M}(\Sigma_i,\phi_i)$ compatible with $\pi_i$. The bundle contact form defining $\eta_i$ is given by
$$\sigma_i=\lambda_{\beta_i}+K_i\pi_i^*(d\theta)\in \Omega^1(\mathcal{M}(\Sigma_i,\phi_i))\quad \textrm{for }\, K_i>0 \,\textrm{ large enough},$$ and it is a product form on a sufficiently small collar neighborhood of the boundary $Y \times S^1$ of $\mathcal{M}(\Sigma_i,\phi_i)$. More precisely, there exists $\epsilon_i>0$ such that
$$\sigma_i=\beta_i+K_i\,d\theta \quad \textrm{on }\, (-1-\epsilon_i,-1] \times Y \times S^1.$$
(Here we intentionally make a $1$ unit shift in the Liouville direction so that $\partial \Sigma_i=\{-1\}\times Y$ for each $i$. This will provide an easier formulation of the gluing arguments below.)
Set
\begin{center}
$K=\textrm{max}(K_1,K_2)$ \quad and \quad $\epsilon=\textrm{min}(\epsilon_1,\epsilon_2)$.
\end{center}
Then for each $i$, $\sigma_i=\lambda_{\beta_i}+K\pi_i^*(d\theta)$ is a bundle contact form on $\mathcal{M}(\Sigma_i,\phi_i)$ and equal to the product form $\sigma_i=\beta_i+Kd\theta$ on $(-1-\epsilon,-1] \times Y \times S^1$. Also note that replacing $\alpha$ with $(1/K)\alpha$ (they define the same contact structure on $Y$), and after shifting along the $t$-direction, we may take
\begin{center}
$\beta_i=Ke^{t+1}\alpha$ \;\; on \;\; $(-1-\epsilon,-1] \times Y$ \;\;for each $i$.
\end{center}
Now, up to diffomorphism, we can consider the total space of $\pi$ as follows:
\begin{center}
$\displaystyle \mathcal{M}(\Sigma_1\cup_Y \overline{\Sigma}_2,\tilde{\phi}_1 \circ \tilde{\phi}_2)=\mathcal{M}(\Sigma_1,\phi_1)\bigcup_{\{-1\}\times Y \times S^1} [-1,1] \times Y \times S^1 \bigcup_{\{1\}\times Y \times S^1} \overline{\mathcal{M}(\Sigma_2,\phi_2)}$.
\end{center}
\noindent Choose two smooth functions $f,g:(-1-\epsilon,1+\epsilon) \to \mathbb{R}$ satisfying:

\begin{itemize}
\item[(1)] $f$ is an even function such that $f(t)=Ke^{t+1}$ near $(-1-\epsilon,-1]$,
\item[(2)] $g$ is an odd function such that $g(t)=K$ near $(-1-\epsilon,-1]$,
\item[(3)] $f'g-fg'>0$ everywhere on $(-1-\epsilon,1+\epsilon)$,
\item[(4)] $f'(t)=0$ if and only if $t=0$.
\end{itemize}
(See Figure \ref{fig:Gluing_functions} for the graphs of such $f$ and $g$.)

\begin{figure}[h]
	\centering
	\includegraphics[scale=.81]{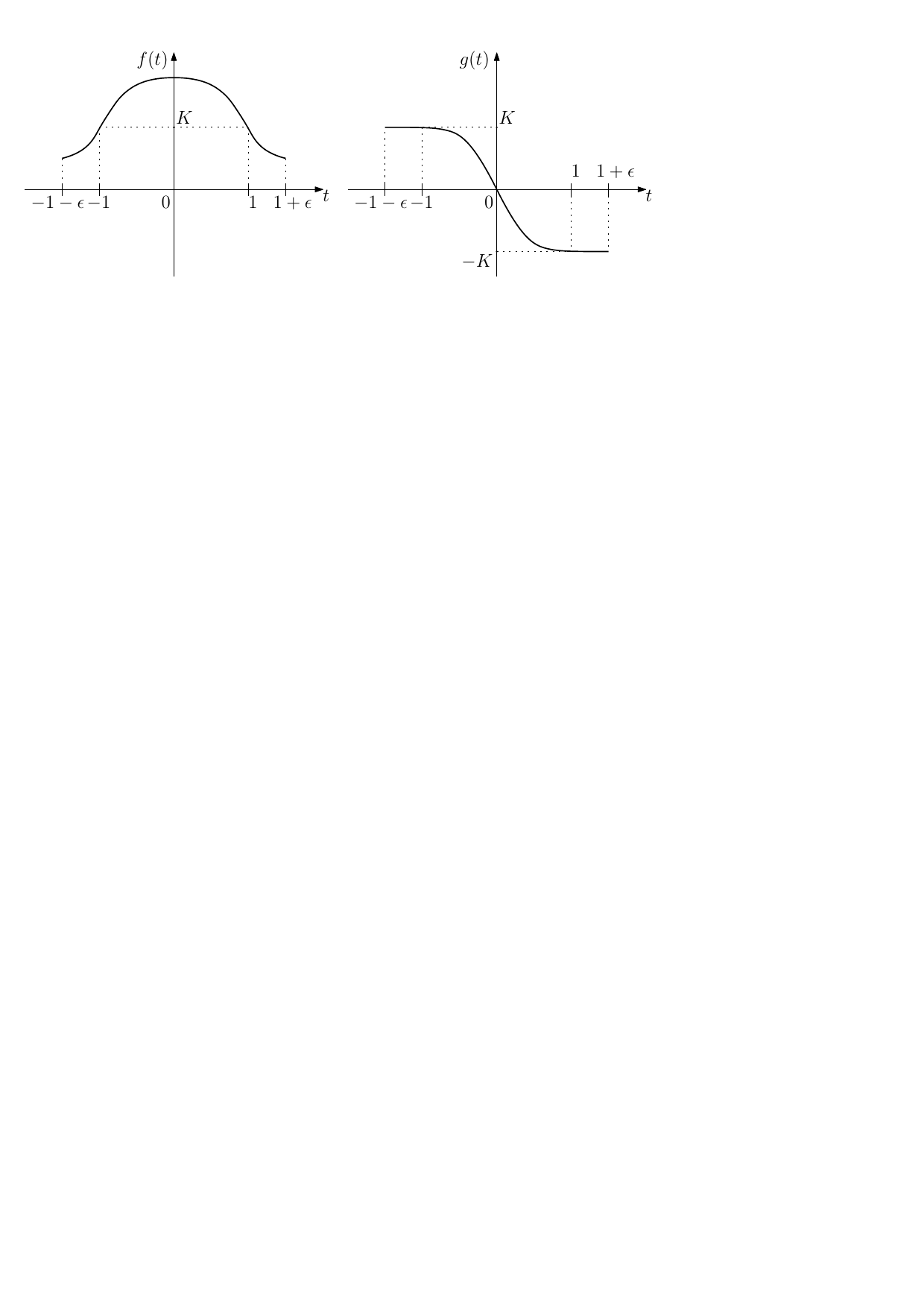}
	\caption{Smooth functions on the interval $(-1-\epsilon,1+\epsilon)$.}
	\label{fig:Gluing_functions}
\end{figure}

Consider the $1$-form $\sigma$ on $\mathcal{M}(\Sigma_1\cup_Y \overline{\Sigma}_2,\tilde{\phi}_1 \circ \tilde{\phi}_2)$ defined by the following rules:
\begin{center}
$
\displaystyle{\sigma = \begin{cases}
\lambda_{\beta_1}+K\pi_1^*(d\theta) &\textnormal{on \qquad \;\;\;} \textrm{int}(\mathcal{M}(\Sigma_1,\phi_1))\\
\;\;\;f\alpha+g\,d\theta &\textnormal{on \;\;\;} (-1-\epsilon,1+\epsilon) \times Y \times S^1\\
 \lambda_{\beta_2}-K\pi_2^*(d\theta) &\textnormal{on \qquad \;\;\;} \textrm{int}(\overline{\mathcal{M}(\Sigma_2,\phi_2)})
\end{cases}}
$
\end{center}

Observe the conditions (1) and (2) imply that $\sigma$ is smooth. Since $\sigma_i$ is a positive contact form on each $\mathcal{M}(\Sigma_i,\phi_i)$, checking that $\sigma$ is a positive contact form on the interiors $\textrm{int}(\mathcal{M}(\Sigma_1,\phi_1))$, $\textrm{int}(\overline{\mathcal{M}(\Sigma_2,\phi_2)})$ is immediate. Moreover, as $\textrm{dim}(\Sigma_i)=2n$, a standard computation gives that on a slightly bigger middle piece $(-1-\epsilon,1+\epsilon) \times Y \times S^1$ we have
$$\sigma \wedge (d\sigma)^n=nf^{n-1}(f'g-fg') \,dt \wedge \alpha \wedge (d\alpha)^{n-1}\wedge d\theta$$
which implies, by the condition (3), that $\sigma$ is a positive contact form on the middle piece as well. Therefore, we obtain a co-oriented contact structure $\eta=\textrm{Ker}(\sigma)$ on the folded sum $\mathcal{M}(\Sigma_1\cup_Y \overline{\Sigma}_2,\tilde{\phi}_1 \circ \tilde{\phi}_2)$.

\medskip
For the last claim, first for any $\theta \in S^1$ the $2$-form $d\sigma$ restricts to exact symplectic structures on the fibers $\pi_1^{-1}(\theta)$ and $\overline{\pi_2^{-1}(\theta)}$. (This is because $d\sigma$ restricts to $d\lambda_{\beta_i}$ on each $\mathcal{M}(\Sigma_i,\phi_i)$ and so Theorem \ref{thm:Contact_form_mapping _tori} applies.) Secondly, observe that for any $\theta \in S^1$ the restriction of $d(f\alpha+g d\theta)$ to the tangent spaces of the region $A_\theta:=(-1-\epsilon,1+\epsilon) \times Y \times \{\theta \}$ is nondegenerate everywhere except when $t=0$. To see this, we compute $$d(f\alpha+g d\theta)|_{TA_\theta}=(f'dt \wedge \alpha+fd\alpha+g'dt \wedge d\theta)|_{TA_\theta}=f'dt \wedge \alpha+fd\alpha$$ from which we have
$$[d(f\alpha+g d\theta)|_{TA_\theta}]^n=[f'dt \wedge \alpha+fd\alpha]^n=nf'f^{n-1}dt \wedge \alpha \wedge (d\alpha)^{n-1}+f^n(d\alpha)^{n}.$$ When we consider this as a top form on $TA_\theta$, the last term vanishes since $\alpha \in \Omega^1(Y)$ and $\textrm{dim}(Y)=2n-1$ (or, alternatively, $d\alpha$ has the full rank equal to $2n-2$ on $\xi \subset TY$). As a result, we conclude that on the tangent spaces of $A_\theta$, we have $$[d(f\alpha+g d\theta)|_{TA_\theta}]^n=nf'f^{n-1}dt \wedge \alpha \wedge (d\alpha)^{n-1}$$ which is a positive volume form everywhere except when $f'=0$ ($\Leftrightarrow t=0$ by the condition (4)).
Hence, except the points on the ``\textit{fold}'' $\{0\}\times Y \times \{\theta \}$, the form $d\sigma$ is nondegenerate on each fiber $\pi^{-1}(\theta)$, that is, the contact structure $\eta$ is compatible with the fibration $\pi$.

\end{proof}

\begin{definition} The contact form $\sigma$ on the folded sum $\mathcal{M}(\Sigma_1\cup_Y \overline{\Sigma}_2,\tilde{\phi}_1 \circ \tilde{\phi}_2)$ constructed in the above proof will be called a \textit{folded contact form}, and its kernel $\eta$ will be called a \textit{folded contact structure}.
\end{definition}

\begin{remark}
The construction of the folded contact form $\sigma$ (and so the structure $\eta$) on the folded sum $\pi: \mathcal{M}(\Sigma_1\cup_Y \overline{\Sigma}_2,\tilde{\phi}_1 \circ \tilde{\phi}_2) \longrightarrow S^1$ depends on the choice of a pair $(f,g)$ of smooth functions satisfying the conditions $(1)-(4)$. So one should write $\sigma(f,g)$ for $\sigma$ and $\eta(f,g)$ for $\eta$ and understand the effect of replacing $(f,g)$ with any other pair of smooth functions satisfying the same conditions. If we alter $(f_0,g_0)=(f,g)$ to another pair $(f_1,g_1)$ through the family $\{(f_s,g_s)\, |\, s\in [0,1]\}$ for which the conditions $(1)-(4)$ are satisfied (for all $s$), then (by following the steps in the above proof) one easily checks that $\{\sigma(f_s,g_s)\, |\, s\in [0,1]\}$ is a family of contact $1$-forms on $\mathcal{M}(\Sigma_1\cup_Y \overline{\Sigma}_2,\tilde{\phi}_1 \circ \tilde{\phi}_2)$, and so the family $\{\eta(f_s,g_s)\, |\, s\in [0,1]\}$ of their kernels defines an isotopy of contact structures compatible with the fibration map $\pi$. Hence, we may drop the pair $(f,g)$ from the notation unless it is necessary to emphases which functions are used in defining $\sigma$ (and so $\eta$).
\end{remark}

\begin{example}
Consider the \textit{trivial} (or \textit{standard}) contact mapping torus $$\pi_{st}:\mathcal{M}_{st}:=\mathcal{M}(D^{2n},id_{D^{2n}})=D^{2n}\times S^1 \longrightarrow S^1 \quad (\textrm{the projection onto} \;\; S^1)$$ with the trivial structure group \;$\textrm{Exact}(D^{2n},\partial D^{2n}, d\lambda_{st})=\{Id_{D^{2n}}\}$. The fiber $(D^{2n}, d\lambda_{st})$ of $\pi_{st}$ is the standard exact symplectic (indeed Stein) filling of the standard (tight) contact sphere $(S^{2n-1},\xi_{st}=\textrm{Ker}(\lambda_{st}))$ where $\lambda_{st}$ is the \textit{standard Liouville form} which is written as $\lambda_{st}=\sum_i(x_i dy_i - y_i dx_i)$ in the standard coordinates $x_1,y_1, ..., x_n,y_n$ on $D^{2n}\subset \mathbb{C}^n$. It is a well-known fact that (or by Theorem \ref{thm:Contact_form_mapping _tori}) we have the \textit{standard} (product) bundle contact form $\sigma_{st}=\lambda_{st}+\pi_{st}^*(d\theta)=\lambda_{st}+d\theta$ on $\mathcal{M}_{st}$ which defines the \textit{standard} bundle contact structure $\eta_{st}=\textrm{Ker}(\sigma_{st})$ which is, clearly, tight by Remark \ref{rmk:new_and_old_structures_isotopic}.

\medskip
Now, taking the folded sum of two copies of $\pi_{st}:\mathcal{M}_{st}\longrightarrow S^1$ (with its standard contact form $\sigma_{st}=\lambda_{st}+d\theta$) gives a fiber bundle $$\pi: \mathcal{M}:=\mathcal{M}(\,D^{2n}\,\cup_{S^{2n-1}} \overline{D^{2n}},\widetilde{Id}_{D^{2n}}\circ \widetilde{Id}_{D^{2n}}) \longrightarrow S^1$$ whose total space $$\mathcal{M}=S^{2n}\times S^1=D^{2n}\times S^1\bigcup_{S^{2n-1}\times S^1} D^{2n}\times S^1$$ admits a folded contact structure $\eta=\textrm{Ker}(\sigma)$ as in Theorem \ref{thm:Contact_Structure_on_Folded_Sum}. We note that $\eta$ is, in fact, Stein fillable (and hence tight) because $(S^{2n}\times S^1,\eta)$ is the convex boundary of the Stein domain $D^{2n+1}\times S^1$ obtained by attaching a Stein $1$-handle $D^1 \times D^{2n+1}$ to the Stein $0$-handle $D^{2n+2}$.   
\end{example}

%
%

\section{Tight Contact Structures on Folded Sums of Mapping Tori} \label{sec:Tight_Contact_Structures_on_Folded_Sums}

A contact structure $\xi$ on $M$ is called \emph{overtwisted} if there is an embedded overtwisted disk in $(M,\xi)$ (see \cite{E}, \cite{BEM}), otherwise it is called \emph{tight}. In this section, we will show that the folded sum operation produces tight contact structures under a suitable assumption on the common convex fold (Theorem \ref{thm:Tight_Parts_on_Folded_Sums_Reeb}). The proof will make use of ``contact vector fields''. A vector field $Z$ on a contact manifold $(M,\xi=\textrm{Ker}(\alpha))$ is said to be \emph{contact} if its flow preserves the contact distribution $\xi$, equivalently, if $\mathcal{L}_Z\alpha=f\alpha$ for some smooth function $f:M\to \mathbb{R}$. The following fundamental lemma in contact geometry characterizes contact vector fields on a given contact manifold. See \cite{Ge} for more details.

\begin{lemma} \label{lem:contact_vector_field}
Let $(M,\emph{Ker}(\alpha))$ be any contact manifold and $R_{\alpha}$ denote the ``Reeb vector field'' of $\alpha$ (i.e., $\alpha(R_\alpha)=1, \; \iota_{R_{\alpha}}d\alpha=0$). Then there is a one-to-one correspondence $$\{Z \in \Gamma(M)\,|\; Z \; \emph{ is contact}\}\longleftrightarrow \{H : M \to\R\,|\; H\; \emph{ is smooth}\}$$ between the set  of all contact vector fields on $M$ and the set of all smooth functions on $M$. The correspondence is given by $Z \to H_Z := \alpha(Z)$ ($H_Z$ is called the ``contact Hamiltonian'' of the contact vector field $Z$), and $H \to Z_H$ where $Z_H$ is the contact vector field uniquely determined by the equations \; $\alpha(Z_H)=H$ \; and \; $\iota_{Z_H} d\alpha=dH(R_{\alpha})\alpha-dH$. \qed
\end{lemma}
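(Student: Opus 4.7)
The plan is to exploit the canonical decomposition $T_xM=\mathbb{R}R_\alpha\oplus \xi_x$ (with $\xi=\textrm{Ker}(\alpha)$) together with the fact that $d\alpha|_\xi$ is fiberwise nondegenerate. The whole lemma is really an algebraic consequence of Cartan's formula $\mathcal{L}_Z\alpha=\iota_Zd\alpha+d(\iota_Z\alpha)$ combined with this nondegeneracy.

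First I would show that if $Z$ is contact, then $Z$ is determined by $H_Z:=\alpha(Z)$. From $\mathcal{L}_Z\alpha=f\alpha$ and Cartan's formula one gets
\[
\iota_Zd\alpha=f\alpha-dH_Z.
\]
Contracting both sides with $R_\alpha$ and using $\iota_{R_\alpha}d\alpha=0$ and $\alpha(R_\alpha)=1$ forces $f=dH_Z(R_\alpha)$, so the conformal factor is not extra data; it is computed from $H_Z$. This gives the required identity $\iota_Zd\alpha=dH_Z(R_\alpha)\alpha-dH_Z$. To see $Z$ is recovered from $H_Z$, write $Z=HR_\alpha+X$ with $X\in\xi$; then $\iota_Zd\alpha=\iota_Xd\alpha$, and restricting the identity to $\xi$ yields $\iota_Xd\alpha|_\xi=-dH|_\xi$, which pins down $X$ uniquely because $d\alpha|_\xi$ is symplectic.

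Conversely, given any smooth $H:M\to\mathbb{R}$, I would define $Z_H:=HR_\alpha+X$, where $X\in\xi$ is the unique section satisfying $\iota_Xd\alpha|_\xi=-dH|_\xi$ (existence and uniqueness again from nondegeneracy of $d\alpha|_\xi$). Then a direct computation using Cartan's formula gives
\[
\mathcal{L}_{Z_H}\alpha=\iota_{Z_H}d\alpha+d(\alpha(Z_H))=\bigl(dH(R_\alpha)\alpha-dH\bigr)+dH=dH(R_\alpha)\,\alpha,
\]
so $Z_H$ is contact with Hamiltonian $H_{Z_H}=\alpha(Z_H)=H$. Together with the previous step this shows the two assignments are mutual inverses, proving the bijection.

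The only subtle point, and really the only step that requires any care, is the uniqueness of $X\in\xi$ solving $\iota_Xd\alpha|_\xi=-dH|_\xi$; this is not an obstacle per se but is where the full force of the contact condition (nondegeneracy of $d\alpha$ on $\xi$) enters. Once this is noted, the rest is a symbolic manipulation with Cartan's formula and the defining properties of $R_\alpha$.
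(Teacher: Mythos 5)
Your proof is correct and is essentially the standard argument: the paper itself states this lemma without proof, deferring to \cite{Ge}, and the proof given there proceeds exactly as you do, via the splitting $TM=\mathbb{R}R_\alpha\oplus\xi$, Cartan's formula, and the nondegeneracy of $d\alpha|_\xi$. The only step you state without spelling out is that the section $X\in\xi$ defined by $\iota_Xd\alpha|_\xi=-dH|_\xi$ satisfies the full identity $\iota_{Z_H}d\alpha=dH(R_\alpha)\alpha-dH$ on all of $TM$, but this follows immediately by evaluating both sides on $R_\alpha$ and on $\xi$ separately, so the argument is complete.
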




For the tightness result, one needs to assume that the contact mapping tori have Weinstein fibers whose convex boundaries satisfying certain condition. A \emph{Weinstein manifold} $(\Sigma,\omega,X,\Psi)$ is a symplectic manifold $(\Sigma,\omega)$ which admits a $\omega$-convex Morse function $\Psi:\Sigma \to \R$ whose complete gradient-like Liouville vector field is $X$ (see \cite{CE}). The following topologically characterizes Weinstein domains and will be useful.

\begin{theorem} [Lemma 11.13 in \cite{CE}] \label{thm:Top_Charac_of_Weinstein_manifolds}
Any Weinstein domain $(\Sigma,\omega,X,\Psi)$ of dimension $2n$ admits a (Weinstein) handle decomposition whose handles have indices at most $n$, and so, its core $\emph{Core}(\Sigma,\omega,X,\Psi)$ is a compact isotropic subcomplex of $(\Sigma,\omega)$ of dimension $n$. 
\end{theorem}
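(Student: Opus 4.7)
The plan is to combine standard Morse theory for $\Psi$ with the Liouville expansion of $X$: the Morse function yields a handle decomposition of $\Sigma$, and the Liouville condition forces the cores of the handles to be $\omega$-isotropic, hence of dimension at most $n$.

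First I would extract the handle decomposition directly from $\Psi$. Since $X$ is complete and gradient-like for the Morse function $\Psi$, standard Morse theory gives that $\Sigma$ is built from a sequence of handle attachments, one handle for each critical point $p$ of $\Psi$ and of index equal to the Morse index of $p$. The core of the handle attached at $p$ is the stable manifold
$$W^s(p)=\{q \in \Sigma \,:\, \phi^X_t(q) \to p \textrm{ as } t \to +\infty\},$$
whose dimension is that Morse index $k$.

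The main step is to show every such $W^s(p)$ is $\omega$-isotropic. For this I would invoke the Liouville identity $\mathcal{L}_X \omega = \omega$, which integrates to $(\phi^X_t)^*\omega = e^t\omega$. Given any $v, w \in T_q W^s(p)$, the pushforwards $d\phi^X_t(v),\,d\phi^X_t(w)$ lie in $T_{\phi^X_t(q)} W^s(p)$ and contract to zero exponentially as $t \to +\infty$, because the linearization $DX(p)$ has only contracting eigenvalues along $T_p W^s(p)$. Applying the flow identity
$$\omega_q(v,w) = e^{-t}\,\omega_{\phi^X_t(q)}\bigl(d\phi^X_t(v),\,d\phi^X_t(w)\bigr)$$
and passing to the limit forces $\omega_q(v,w) = 0$. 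Hence $W^s(p)$ is isotropic, and the standard bound on isotropic subspaces of a $2n$-dimensional symplectic vector space gives $k = \dim W^s(p) \leq n$. The core $\textrm{Core}(\Sigma,\omega,X,\Psi) = \bigcup_{p \in \textrm{Crit}(\Psi)} W^s(p)$ is then an isotropic CW-subcomplex of dimension at most $n$, compact because $X$ is complete and $\Psi$ is $\omega$-convex on the compact domain $\Sigma$, so descending trajectories cannot escape.

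The main obstacle I expect is the local analysis at each critical point: showing that $DX(p)$ has real eigenvalues coming in pairs $(\lambda, 1-\lambda)$ (a consequence of $\mathcal{L}_X\omega=\omega$ evaluated at the zero $p$), that $T_p W^s(p)$ is precisely the sum of eigenspaces with negative eigenvalues, and that its dimension agrees with the Morse index of $\Psi$ at $p$. Once this local normal form is established, for instance via a Weinstein-Darboux-type model near $p$ or a carefully chosen auxiliary metric making $X$ a genuine gradient of $\Psi$, the global assembly described above is routine.
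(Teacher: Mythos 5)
The paper offers no proof of this statement---it is imported verbatim as Lemma 11.13 of \cite{CE}---and your argument is essentially the proof given there: the identity $(\phi^X_t)^*\omega=e^t\omega$ together with the boundedness (indeed contraction) of stable tangent vectors forces $\omega$ to vanish on each stable manifold $W^s(p)$, so each handle core is isotropic and hence of dimension, equivalently index, at most $n$. Your write-up is correct; the only points worth tightening are that mere boundedness of $d\phi^X_t(v)$ already suffices to pass to the limit, and that the hyperbolicity of the zeros of $X$ (hence the eigenvalue pairing $\lambda\mapsto 1-\lambda$ and the identification of $T_pW^s(p)$ with the descending directions of $\Psi$) is built into the Cieliebak--Eliashberg definition of a gradient-like Liouville field at a nondegenerate critical point, so your ``main obstacle'' is already handled by the hypotheses.
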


We prove:

\begin{theorem} \label{thm:Tight_Parts_on_Folded_Sums_Reeb}
For $n\geq 3$, suppose $(\Sigma_i^{2n}, d\beta_i,X_i,\Psi_i)$ are two Weinstein  domains with the same convex boundary $(Y,\emph{Ker}(\beta_i|_{TY}))$ such that the coinciding induced contact forms $\beta_1|_{TY}=\beta_2|_{TY}$ have no contractible periodic Reeb orbit. Let $\pi_i:\mathcal{M}_i:=\mathcal{M}(\Sigma_i,\phi_i)\longrightarrow S^1$ be any two contact mapping tori with the structure groups $\emph{Exact}(\Sigma_i,\partial \Sigma_i, d\beta_i)$, and consider their folded sum $\pi: \mathcal{M}:= \mathcal{M}(\Sigma_1\cup_Y \overline{\Sigma}_2,\tilde{\phi}_1 \circ \tilde{\phi}_2) \longrightarrow S^1$. Also let $\eta_i=\emph{Ker}(\sigma_i)$ be a bundle contact structure on each $\mathcal{M}_i$. Then the folded contact structure $\eta(f,g)=\emph{Ker}(\sigma(f,g))$ on $\mathcal{M}$ obtained from $\eta_1$ and $\eta_2$ is tight.
\end{theorem}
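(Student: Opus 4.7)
The plan is to show that the folded contact form $\sigma = \sigma(f,g)$ itself is \emph{hypertight} on $\mathcal{M}$, i.e., that its Reeb vector field $R_\sigma$ admits no closed orbit that is contractible in $\mathcal{M}$. Tightness of $\eta = \mathrm{Ker}(\sigma)$ then follows from the higher-dimensional analog of Hofer's theorem: by the plastikstufe theorem of Niederkr\"uger and Albers--Hofer, the Reeb flow of any contact form defining a contact structure that contains a plastikstufe has a contractible periodic orbit, and every overtwisted contact structure (in the sense of Borman--Eliashberg--Murphy) on a closed $(2n{+}1)$-manifold with $n\geq 1$ contains such a plastikstufe. Lemma \ref{lem:contact_vector_field} enters implicitly here: $R_\sigma$ is itself the contact vector field with contact Hamiltonian $H\equiv 1$, so the Reeb calculation below is the only contact-vector-field datum I need.

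My first concrete step is to compute $R_\sigma$ on each of the three pieces of the decomposition $\mathcal{M} = \mathcal{M}_1 \cup \bigl((-1-\epsilon,1+\epsilon)\times Y \times S^1\bigr) \cup \overline{\mathcal{M}_2}$. On the interior of each $\mathcal{M}_i$, where $\sigma$ coincides with the bundle contact form $\lambda_{\beta_i}+K\pi_i^*(d\theta)$, the two-form $d\sigma = d\lambda_{\beta_i}$ is nondegenerate on vertical subspaces by the proof of Theorem \ref{thm:Contact_form_mapping _tori}, so its one-dimensional kernel is transverse to the fibers of $\pi_i$; hence $R_\sigma$ is horizontal and $d\pi_i(R_\sigma)$ is a positive multiple of $\partial_\theta$, so every closed orbit in this region winds nontrivially around $S^1$. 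On the middle slab with $\sigma=f\alpha+g\,d\theta$, a direct computation using $\iota_{R_\alpha}d\alpha=0$ and conditions $(3)$--$(4)$ yields
\begin{equation*}
R_\sigma \;=\; \frac{-g'}{f'g-fg'}\,R_\alpha \;+\; \frac{f'}{f'g-fg'}\,\partial_\theta \quad (t\neq 0),\qquad R_\sigma\big|_{t=0} \;=\; \frac{1}{f(0)}\,R_\alpha.
\end{equation*}
Away from the fold the coefficient of $\partial_\theta$ is nonzero, so closed orbits again wind nontrivially around $S^1$. The only candidate contractible closed orbits therefore live on the fold $\{0\}\times Y\times S^1$, where they are precisely the closed orbits of $R_\alpha$ on $Y$.

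The remaining task is to upgrade non-contractibility \emph{in $Y$} (the hypothesis) to non-contractibility \emph{in $\mathcal{M}$}; this is the only place the dimension bound $n\geq 3$ is really used. By Theorem \ref{thm:Top_Charac_of_Weinstein_manifolds}, each Weinstein core $C_i\subset\Sigma_i$ is a compact isotropic CW subcomplex of dimension at most $n$, so of codimension at least $n\geq 3$. Since $\Sigma_i\setminus C_i$ deformation retracts onto $Y$ along the negative-time Liouville flow of $X_i$, a transversality argument (any $2$-disk can be perturbed off a codimension $\geq 3$ stratified subcomplex, then retracted into $Y$) shows that $\pi_1(Y)\hookrightarrow \pi_1(\Sigma_i)$ is injective for $i=1,2$. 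Seifert--van Kampen applied to the fiber $F=\Sigma_1\cup_Y\overline{\Sigma}_2$ then gives injectivity of $\pi_1(Y)$ into the amalgamated product $\pi_1(\Sigma_1)\ast_{\pi_1(Y)}\pi_1(\Sigma_2)=\pi_1(F)$, and the long exact sequence of the fibration $\pi\colon\mathcal{M}\to S^1$ (using $\pi_2(S^1)=0$) gives injectivity of $\pi_1(F)$ into $\pi_1(\mathcal{M})$. Composing these inclusions, any closed $R_\alpha$-orbit on $Y$ that is non-contractible in $Y$ remains non-contractible in $\mathcal{M}$.

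Combining the three cases, $\sigma$ is hypertight, whence $\eta$ is tight. The main obstacle I expect is the $\pi_1$-injectivity chain in the third step: one must be careful with transversality against the stratified CW core $C_i$ rather than a smooth submanifold, and one must justify the amalgamation hypothesis of Seifert--van Kampen over $Y$. The Reeb calculation and the final appeal to the plastikstufe/Hofer principle are comparatively routine once the dimension bookkeeping is under control.
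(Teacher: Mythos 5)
Your proposal is correct in outline, but it takes a genuinely different route from the paper's. Both arguments reduce tightness to the non-existence of a contractible closed Reeb orbit for $\sigma$ via Casals--Murphy--Presas and Albers--Hofer, and both ultimately localize the problem to the fold, where $R_\sigma$ restricts to the constant rescaling $\tfrac{1}{f(0)}R_\alpha$ of $R_\alpha$. The paper, however, argues by contradiction and \emph{transports} the hypothetical contractible orbit together with its bounding disk: it first pushes the disk off a fiber $\pi^{-1}(\theta_0)$ and off the thickened Weinstein cores $(\mathcal{C}_1\cup\mathcal{C}_2)\times[-\delta,\delta]$ using two general-position lemmas for transverse knots and their bounding disks (this is where $n\geq 3$ enters there), and then flows the disk along an explicit contact vector field with Hamiltonian $\mu z$, assembled from the fiberwise Liouville fields and $z\partial_z$, until it lands in $\{0\}\times Y\times\{0\}$, contradicting the hypothesis on $(Y,\alpha)$. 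You instead prove hypertightness directly: you classify all closed $R_\sigma$-orbits (winding orbits off the fold versus $R_\alpha$-orbits on it) and show the latter cannot be contractible in $\mathcal{M}$ via the chain $\pi_1(Y)\hookrightarrow\pi_1(\Sigma_i)\hookrightarrow\pi_1(F)\hookrightarrow\pi_1(\mathcal{M})$, where $n\geq 3$ enters only through the codimension count for the cores (equivalently, $\Sigma_i$ is obtained from $\partial\Sigma_i\times[0,1]$ by attaching handles of index $\geq n\geq 3$, so $\pi_1(Y)\to\pi_1(\Sigma_i)$ is in fact an isomorphism). Your route avoids the isotopy machinery entirely and, in particular, the paper's final step of modifying $\alpha$ to $\alpha'$ near the displaced orbit. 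Two details you should make explicit: (i) a closed orbit could a priori visit regions where $d\theta(\dot\gamma)$ has opposite signs and have zero total winding; this is ruled out because $R_\sigma$ has no $\partial_t$-component on the neck $(-1-\epsilon,1+\epsilon)\times Y\times S^1$, so $t$ is a first integral there, no orbit crosses the fold, and the sign of $d\theta(\dot\gamma)$ is therefore constant along each orbit; (ii) if $Y$ is disconnected, the amalgamated product must be replaced by a graph of groups, although edge-group injectivity still gives $\pi_1(Y_j)\hookrightarrow\pi_1(F)$ for each component $Y_j$.
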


\begin{proof} We start with similar steps used in the proof of Theorem \ref{thm:Contact_Structure_on_Folded_Sum}: By assumption, we can write the induced contact structure as $$\xi=\textrm{Ker}(\alpha) \quad \textrm{ where } \quad \beta_1|_{TY}=\alpha=\beta_2|_{TY},$$ and taking $K>0$ is a sufficiently large, we write $\sigma_i=\lambda_{\beta_i}+K\pi_i^*(d\theta)$. Then by replacing $\alpha$ with $(1/K)\alpha$ (and modifying $\beta_1$ and $\beta_2$ accordingly near the common boundary $Y$), we obtain the folded contact form $\sigma(f,g)$ on $$\mathcal{M}=\mathcal{M}_1\bigcup_{\{-1\}\times Y \times S^1} [-1,1] \times Y \times S^1 \bigcup_{\{1\}\times Y \times S^1} \overline{\mathcal{M}_2}$$ given by the description
\begin{center}
$
\displaystyle{\sigma(f,g) = \begin{cases}
\lambda_{\beta_1}+K\pi_1^*(d\theta) &\textnormal{on \qquad \;\quad\quad} \textrm{int}(\mathcal{M}_1)\\
\;\;\;f\alpha+g\,d\theta &\textnormal{on \;\;\;} (-1-\epsilon,1+\epsilon) \times Y \times S^1\\
 \lambda_{\beta_2}-K\pi_2^*(d\theta) &\textnormal{on \qquad \;\quad\quad} \textrm{int}(\overline{\mathcal{M}_2})
\end{cases}}
$
\end{center}
where $\epsilon>0$ is sufficiently small so that
$\beta_1=Ke^{t+1}\alpha=\beta_2$ on $(-1-\epsilon,-1] \times Y$.
The Reeb vector field of $\sigma:=\sigma(f,g)$ is given by
\begin{center}
$
\displaystyle{R_\sigma= \begin{cases}
\qquad \qquad (1/K)\partial_{\theta} &\textnormal{on \qquad \;\quad\quad} \textrm{int}(\mathcal{M}_1)\\
\dfrac{-g'}{f'g-fg'}R_{\alpha}+\dfrac{f'}{f'g-fg'}\partial_{\theta} &\textnormal{on \;\;\;} (-1-\epsilon,1+\epsilon) \times Y \times S^1\\
 \; \quad \qquad -(1/K)\partial_{\theta}  &\textnormal{on \qquad \;\quad\quad} \textrm{int}(\overline{\mathcal{M}_2})
\end{cases}}
$
\end{center}
where $R_{\alpha}$ is the Reeb vector field of the contact form $\alpha$ on $Y$. Note that by assumption $R_{\alpha}$ has no contractible periodic orbit.

\medskip
Now, on the contrary, suppose that $(\mathcal{M},\eta(f,g))$ is overtwisted. We know from \cite{CMP} that having an overtwisted disk is equivalent to having a plastikstufe. On the other hand, the existence of a plastikstufe in a closed contact manifold implies the existence of a contractible periodic Reeb orbit associated to every contact form defining the contact structure \cite{AH}. So let $\reeb\approx S^{1}$ be an embedded contractible periodic Reeb orbit in $(\mathcal{M},\eta(f,g))$ associated to the contact form $\sigma$. In what follows we will use: 1) Any transverse knot stays transverse under the flows of contact vector fields. 2) In a small neighborhood of it, any transverse knot can be made parallel to the Reeb directions (i.e., can be realized as a Reeb orbit) by multiplying the original contact form with a suitable smooth function whose support lies in the neighborhood. (See \cite{Ge} for more details.) Hence, whenever we move $\reeb$ under such flows, we may assume that its image is still a Reeb orbit of the new contact form (isotopic to the older one).

\medskip
Since $\reeb$ bounds a $2$-disk, say $\dd$, by pushing them together along the flow of a contact vector field whose contact Hamiltonian is obtained by multiplying the contact Hamiltonian $H_\sigma:\mathcal{M} \to \R$ (constant $1$ map) of $R_\sigma$ with a suitable smooth function $\mathcal{M} \to \mathbb{R}$, we may assume that $\dd$ (and so its boundary $\reeb$) misses a fiber $\pi^{-1}(\theta_0)$ over some $\theta_0 \in S^1$. So
$$\reeb \subset \dd \subset \mathcal{M}\setminus \pi^{-1}(\theta_0)\cong \left(\Sigma_1 \cup_Y \overline{\Sigma}_2\right) \times \mathbb{R}.$$
Note that by replacing the $\theta$-coordinate on $S^1$ by the $z$-coordinate on $\mathbb{R} \cong S^1 \setminus \{\theta_0\}$, one obtains the restriction of any function, form or vector field on $\mathcal{M}$ to any subset of $\mathcal{M}\setminus \pi^{-1}(\theta_0)$.
The next step of the proof will rely on the following two general facts.

\begin{lemma} \label{lem:transverse}
Suppose that $n\geq 2$. Let $L$ be any transverse knot in a contact manifold $(M,\xi)$ of dimension $2n+1$. Also let $\Delta$ be any compact subset of $M$ admitting a CW-complex structure of finite type consisting of cells of dimension $\leq n+1$ such that $L\not\subset \Delta$. Then $L$ is contact isotopic (through transverse knots) to a transverse knot $L'$ in any given $\epsilon$-neighborhood of itself so that $L' \cap \Delta=\emptyset$.
\end{lemma}

\begin{proof}
It is well-known that any transverse knot has a neighborhood contactomorphic to the standard model (see Example 2.5.16 in \cite{Ge}, for instance). More precisely, there exists an $\epsilon$-neighborhood $N_\epsilon$ of $L$ in $M$ (which can be taken arbitrarly small) such that $(N_\epsilon,\xi |_{N_\epsilon})$ is contactomorphic to $$\left(S^1 \times \R^{2n},\textrm{Ker}(d\theta+ \sum_{i=1}^n (x_i dy_i-y_idx_i))\right)$$ where $L$ corresponds to $S^1 \times \{0\}$. Since $L\not\subset \Delta$, there is a point, say $p_0$, in $L\setminus \Delta$ which corresponds to a point $\theta_0 \times \{0\}\in S^1 \times \R^{2n}$. If $L\cap \Delta=\emptyset$, then nothing to prove. If not,
$$L\cap \Delta \subset \left(\R \times \R^{2n},\textrm{Ker}(dz+ \sum_{i=1}^n (x_i dy_i-y_idx_i))\right)\stackrel{\Theta}{\cong} \left(\R \times \R^{2n},\textrm{Ker}(dz+ \sum_{i=1}^n x_i dy_i)\right)$$ where $z$ is the coordinate on $\R=S^1 \setminus \{\theta_0\}$ and $\Theta$ is the well-known contactomorphism. (The explicit rule of $\Theta$ can be found, for instance, in Example 2.1.3 in \cite{Ge}.) Therefore, in this (the most right) local model near $L\setminus \{p_0\}$, one can consider $2n+1$ linearly independent smooth contact vector fields $Z_1, Z_2, ..., Z_{2n+1}$ given by
\begin{equation} \label{eqn:contact_vector_fields}
	Z_1=\partial_{y_1}\;,...,\;Z_n=\partial_{y_n}, \quad Z_{n+1}=\partial_{x_1}-y_1 \partial_{z}\;,...,\;Z_{2n}=\partial_{x_n}-y_n \partial_{z}\;, \quad Z_{2n+1}=\partial_{z}.
\end{equation}

The corresponding contact Hamiltonian functions (as in Lemma \ref{lem:contact_vector_field}), respectively, are
\begin{equation} \label{eqn:contact_Hamiltonians}
	H_1=x_1\;,...,\;H_n=x_n, \quad H_{n+1}=-y_1\;,...,\;H_{2n}=-y_n\;, \quad H_{2n+1}=1.
\end{equation}

For $p \in \Delta$, let $d_{max}(\Delta,p)$ denote the maximum of the dimensions of the cells in $\Delta$ containing $p$. Clearly, $d_{max}(\Delta,p)\leq n+1$ for all $p \in \Delta$. So, for any $p \in L\cap \Delta$ we have $$dim(T_pM)-[d_{max}(\Delta,p)+dim(T_pL)]\geq 2n+1-[n+1+1]=n-1\geq 1 \quad \textrm{(as } n\geq 2).$$ 

\noindent Therefore, at each $p \in L\cap \Delta$, there exists at least one direction in $T_pM$ which is transversal to both $\Delta$ and $L$ at $p$. Any such direction can be written as a linear combination of $Z_1, Z_2, ..., Z_{2n+1}$. Thus, by compactness of $L$ and $\Delta$ (and also by finiteness of $\Delta$), there exist smooth functions $\mu_1, \mu_2, ..., \mu_{2n+1}: \R \times\R^{2n} \to \mathbb{R}$ compactly supported near $L\cap \Delta$ [i.e., in $(\R \times\R^{2n},\textrm{Ker}(dz+ \sum_{i=1}^n x_i dy_i))$] such that by pushing $L$ using the flow of the contact vector field corresponding to the contact Hamiltonian $\oplus_{i=1}^{2n+1}\mu_i H_{i}$, one obtains a transverse knot $L'$ (contact isotopic to $L$ through transverse knots) disjoint from $\Delta$.

\end{proof}

\begin{lemma} \label{lem:contractible_transverse}
Suppose that $n\geq 3$. Let $L$ be any transverse knot bounding a $2$-disk $D$ in a contact manifold $(M,\xi)$ of dimension $2n+1$. Also let $\Delta$ be any compact subset of $M$ admitting a CW-complex structure of finite type consisting of cells of dimension $\leq k$ such that $L\not\subset \Delta$. Then if $k\leq n+1$, $L$ is contact isotopic (through transverse knots) to a transverse knot $L'$ in any given $\epsilon$-neighborhood of itself so that $L' \cap \Delta=\emptyset$. Furthermore, this contact isotopy extends to a $\epsilon$-small homotopy from $D$ to another $2$-disk $D'$ in $M$ so that $\partial D'=L'$ and $D' \cap \Delta=\emptyset$.
\end{lemma}

\begin{proof}
First note that the condition $dim(\Delta)\leq n+1$ (the dimension assumption in Lemma \ref{lem:transverse}) is satisfied, so the claim about the existence of the contact isotopy $\Phi: S^1 \times [0,1] \to M$ from $L=\Phi(s,0)$ to another transverse knot $L'=\Phi(s,1)$ (with $L' \cap \Delta=\emptyset$) follows directly from Lemma \ref{lem:transverse}.  

\medskip
\noindent As before, if $D\cap \Delta=\emptyset$, then nothing to prove. If not, observe this time $d_{max}(\Delta,p)\leq k$ for all $p \in \Delta$ by assumption. So, for any point $p \in L\cap \Delta$ we have 
$$dim(T_pM)-[d_{max}(\Delta,p)+dim(T_pD)]\geq 2n+1-[k+2]=2n-k-1.$$
Note that since $n\geq 3$ and $k\leq n+1$ we have $2n-k-1\geq 1$. Therefore, at each $p \in D\cap \Delta$, there exists at least one direction in $T_pM$ which is transversal to both $\Delta$ and $D$ at $p$. Indeed, when $p \in L\cap \Delta \subset D\cap \Delta$ the corresponding directions are already used to obtain the contact isotopy $\Phi$ from $L=\Phi(s,0)$ to $L'=\Phi(s,1)$. So let $\widetilde{\Phi}: D^2 \times [0,1] \to M$ be any homotopy extension of $\Phi$ obtained by moving $D$ along the directions transversal to both $\Delta$ and $D$. Clearly, such homotopy can be achieved in an arbitrary small neighborhood of $D$. By construction $D=\widetilde{\Phi}(s,0)$, $D'\:=\widetilde{\Phi}(s,1)$, $L'=\partial D'$ and $D' \cap \Delta=\emptyset$ as required.

\end{proof}

Returning back to the proof,
let $\Sigma^z_1:=\pi_1^{-1}(z)(\cong \Sigma_1)$ and $\Sigma^z_2:=\pi_2^{-1}(z)(\cong \overline{\Sigma}_2)$ denote the fiber of $\pi_1$ and $\pi_2$ over a point $z \in \R \cong S^1 \setminus \{\theta_0\}$ equipped with the Weinstein structures $(d\beta^z_1,X^z_1,\Psi^z_1)$ and $(d\beta^z_2,X^z_2,\Psi^z_2)$, respectively. We know from Theorem \ref{thm:Top_Charac_of_Weinstein_manifolds} that the core $\textrm{Core}(\Sigma^z_i,d\beta^z_i,X^z_i,\Psi^z_i)$ of each of these Weinstein domains is a compact $n$-dimensional subcomplex. Note that the union of these cores gives the ``core'' $\textrm{Core}(\mathcal{M}_i\setminus \pi_i^{-1}(\theta_0))$ of the trivial fibration $$\pi_i|_{\mathcal{M}_i\setminus \pi_i^{-1}(\theta_0)}:\Sigma_i \times \R \to \R$$ which is just a $1$-dimensional thickening of $\mathcal{C}_i:=\textrm{Core}(\Sigma_i,d\beta_i,X_i,\Psi_i)$, i.e.,
\begin{center}
$\textrm{Core}(\mathcal{M}_i\setminus \pi_i^{-1}(\theta_0))=\bigcup_{z \in \R}\textrm{Core}(\Sigma^z_i,d\beta^z_i,X^z_i,\Psi^z_i)\cong \mathcal{C}_i\times \R.$
\end{center}
Putting these cores together we get the core $(\mathcal{C}_1\cup \mathcal{C}_2) \times \R$  of $\mathcal{M}\setminus \pi^{-1}(\theta_0)\cong \left(\Sigma_1 \cup_Y \overline{\Sigma}_2\right) \times \mathbb{R}$.
Since $\dd \cap \pi^{-1}(\theta_0)=\emptyset$, the compactness of $\dd$ implies that there exists $0<\delta\in \R$ such that
\begin{center}
$\reeb \subset \dd \subset \left(\left(\Sigma_1 \cup_Y \overline{\Sigma}_2\right) \times [-\delta,\delta],\eta(f,g)|_{\left(\Sigma_1 \cup_Y \overline{\Sigma}_2\right) \times [-\delta,\delta]} \right).$
\end{center}
Consider the core $\mathcal{C}:=(\mathcal{C}_1\cup \mathcal{C}_2) \times [-\delta,\delta]$ of $\left(\Sigma_1 \cup_Y \overline{\Sigma}_2\right) \times [-\delta,\delta]$. By assumption, $\mathcal{C}$ is a finite compact subcomplex consisting of cells of dimension at most $\leq n+1$. Therefore, by Lemma \ref{lem:contractible_transverse}, we may assume that $\reeb$ (and the $2$-disk $\dd$ it bounds) are disjoint from the core $\mathcal{C}$, and hence from the core $(\mathcal{C}_1\cup \mathcal{C}_2) \times \R$.

\medskip
Next using the projection $\Sigma_1 \times \R \to \Sigma_1^z$ (resp., $\overline{\Sigma}_2 \times \R \to \Sigma_2^z$), by pulling back the Liouville vector field $X^z_1$ (resp., $X^z_2$) of a fiber $\Sigma_1^z$ (resp., $\Sigma_2^z$) obtain a smooth vector field $X_1$ (resp., $X_2$) on $\mathcal{M}_{1}\setminus \pi_1^{-1}(\theta_0)$ (resp,. on $\overline{\mathcal{M}_{2}}\setminus \pi_2^{-1}(\theta_0)$). Note that the restrictions of $X_i$ to each fiber $\Sigma_i^z$ is the Liouville vector field of that fiber by construction. Therefore, for each point $p \in \Sigma_i^z \setminus \textrm{Core}(\Sigma_i^z)$, there is a unique flow line of $X_i$ connecting $p$ to a unique point $q$ in the convex boundary $\partial \Sigma_i^z$ (a copy of $Y$). 
Consider the smooth vector field $Z$ on the contact open subdomain 
$\left(\mathcal{M}\setminus \pi^{-1}(\theta_0), \eta(f,g)|_{\mathcal{M}\setminus \pi^{-1}(\theta_0)} \right)$ given by:
\begin{center}
$
\displaystyle{Z= \begin{cases}
\qquad \;\; X_1 + z\partial_{z} &\textnormal{on \qquad \;\quad} \textrm{int}(\mathcal{M}_{1}\setminus \pi^{-1}(\theta_0))\\
\dfrac{fg}{f'g-fg'}\partial_{t} + z\partial_{z} &\textnormal{on \;\;\;} (-1-\epsilon,1+\epsilon) \times Y \times \R \\
\qquad \;\; X_2 + z\partial_{z} &\textnormal{on \qquad \;\quad} \textrm{int}(\overline{\mathcal{M}_{2}}\setminus \pi^{-1}(\theta_0))
\end{cases}}
$
\end{center}
Again the smoothness of $Z$ follows from the conditions $(1)-(4)$ on $f$ and $g$, and one can easily check that $Z$ is contact and its contact Hamiltonian function is $H_Z=z$.

\medskip
Let $Z'$ be the contact vector field whose contact Hamiltonian is $H_{Z'}=\mu H_Z$ where $\mu:\mathcal{M}\setminus \pi^{-1}(\theta_0) \to \R$ is a smooth function satisfying:
\begin{itemize}
\item $\mu\equiv 1$ \quad on \quad
$\displaystyle [\mathcal{M}\setminus \pi^{-1}(\theta_0)] \setminus ([-1,1] \times Y \times \R)$,\\
\item $\mu(t,x,z)=h(t)$ on
 $[-1,1] \times Y \times \R$,  where $h(t)$ is a smooth function on $[-1,1]$ such that $h$ is even, $h\equiv 1$ near $t=\pm 1$ and $h(t)=0$ if and only if $t=0$ (see Figure \ref{fig:the_function_h}).
\end{itemize}
\begin{figure}[h]
	\centering
	\includegraphics[scale=.85]{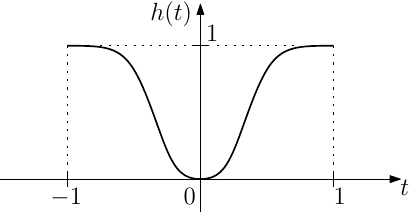}
	\caption{Smooth function $h(t)$ on the interval $[-1,1])$.}
	\label{fig:the_function_h}
\end{figure}
By pushing (isotoping) $\dd$ (and so $\reeb$) along the flow of $Z'$, eventually their final images will be completely contained in $[-1,1] \times Y \times \R$. Assuming this has been already done, we may assume that the contact manifold $$\left( [-1,1] \times Y \times \R, \textrm{Ker}(f(t)\alpha+g(t)dz) \right)$$ contains a periodic Reeb orbit $\reeb$ and the disk $\dd$ it bounds.

\medskip
Observe that under the flow maps of $Z=fg/(f'g-fg')\partial_{t} + z\partial_{z}$, the $z$-coordinates of the images of the points in $[-1,1] \times Y \times \{0\}$ are stationary (and all equal to $0$). In other words, the flow lines entering $(z=0)$-level set $[-1,1] \times Y \times \{0\}$ can not escape the set. By the compactness, after sufficiently isotoping $\dd$ along the flow lines of $Z$, we may assume that $\dd$ (and so $\reeb$) is completely contained in $[-1,1] \times Y \times \{0\}$. Now note that 
\begin{center}
$\dfrac{fg}{(f'g-fg')}>0 \;\; \textrm{if}\;\; t\in [-1,0),\; \dfrac{fg}{(f'g-fg')}=0 \;\;\textrm{if}\;\; t=0,\; \dfrac{fg}{(f'g-fg')}<0 \;\;\textrm{if}\;\;  t\in (0,1].$
\end{center}
Consequently, under the flow maps of $Z$, the $t$-coordinates of the images of the points in $\{0\} \times Y \times \{0\}$ are stationary (and all equal to $0$), and so, the flow lines entering $(t=0)$-level set $\{0\} \times Y \times \{0\}$ can not escape the set. Again, by the compactness, after sufficiently pushing $\dd$ along the flow lines of $Z$, eventually the image of $\dd$ (and so $\reeb$) is completely contained in $\{0\} \times Y \times \{0\}$. Denote this final image of $\reeb$ in $\{0\} \times Y \times \{0\}$ by $\reeb'$.

\medskip
Finally, since $f(0)=Ke$ and $g(0)=0$, we have  $\sigma(f,g)|_{\{0\}\times Y \times \{0\}}=Ke\alpha$. Therefore, $(\{0\}\times Y \times \{0\},\eta(f,g)|_{\{0\}\times Y \times \{0\}})$ is contactomorphic to $(Y,\xi=\textrm{Ker}(\alpha))$. Indeed, this is a strict contactomorphism upto a rescaling of $\alpha$ via the constant $Ke$, so the Reeb orbits of $Ke\alpha$ are just those of $\alpha$ reparametrized with a constant scale of $1/Ke$. Thus, we can find an isotopy of contact forms (compactly supported in an $\epsilon$-neighborhood of $\reeb'$ in $Y=\{0\}\times Y \times \{0\}$) from $\alpha$ to another contact form $\alpha'$ of $\xi$ such that $\reeb'$ is parallel to the Reeb directions determined by $\alpha'$. However, an existence of a contractible periodic Reeb orbit in $(Y,\xi=\textrm{Ker}(\alpha'))$ is a direct contradiction to the assumption stated as $\alpha$ and $\alpha'$ have the same (isomorphic) Reeb dynamics. This finishes the proof of Theorem \ref{thm:Tight_Parts_on_Folded_Sums_Reeb}.

\end{proof}


\begin{remark}
It is well known that any contact structure on a closed manifold is supported by an open book with Weinstein pages (see, for instance, \cite{Gi2}, \cite{Ge}). This is one of the motivations that the fibers are assumed to be Weinstein domains in Threorem \ref{thm:Tight_Parts_on_Folded_Sums_Reeb}. However, using almost the same steps, one can prove a similar statement in the case where fibers are Liouville domains (of finite types) with at most half-dimensional cores. Of course, one must still assume that coinciding contact forms on the common boundary have no contractible periodic Reeb orbits. 
\end{remark}

\begin{example}
Let us consider the unit cotangent disk bundle $D^*T^3\cong T^3\times D^3$ of the $3$-torus which can be equipped with a Liouville structure as follows: Let $(\theta_1,\theta_2,\theta_3)$ denote the (global) angular coordinates on $T^3$ and  $(x_1,x_2,x_3)$ denote the standard euclidean coordinates on $D^3$. So  $$D^*T^3=\{(\theta_1,\theta_2,\theta_3,x_1,x_2,x_3) \,|\, \theta_i \in [0,2\pi], \; 0\leq x_1^2+x_2^2+x_3^2\leq1 \}.$$ Then for the $1$-form $\beta=x_1 d\theta_1-x_2d\theta_2+x_3d\theta_3$, one easily see that $(d\beta)^3$ is a volume form on $D^*T^3$. Also it is straight forward to check that the vector field $$X=x_1\dfrac{\partial}{\partial x_1}+x_2\dfrac{\partial}{\partial x_2}+x_3\dfrac{\partial}{\partial x_3}$$ is a Liouville field of $d\beta$, and it is transversely pointing out from the boundary $$\partial(D^*T^3)=T^3\times S^2=\{(\theta_1,\theta_2,\theta_3,x_1,x_2,x_3) \,|\, \theta_i \in [0,2\pi], \; x_1^2+x_2^2+x_3^2=1 \}.$$ Clearly, the flow of $X$ exists for all time and its backward flow shrinks $D^*T^3$ onto its core $T^3\times\{pt\}$. Therefore,  $(D^*T^3,d\beta, X)$ is a $6$-dimensional Liouville domain with a compact core of dimension $3$ (half-dimension).

\medskip
The Reeb vector field of the induced contact form $\alpha:=\beta|_{T(T^3\times S^2)}$ on the convex boundary $(T^3\times S^2, \textrm{Ker}(\alpha))$ is given by $$R=x_1\dfrac{\partial}{\partial \theta_1}-x_2\dfrac{\partial}{\partial \theta_2}+x_3\dfrac{\partial}{\partial \theta_3}.$$ Clearly, all periodic Reeb orbits travels along the compositions of the directions along the coordinates $(\theta_1,\theta_2,\theta_3)$ which are homotopically non-trivial in $T^3\times S^2$, and hence, the contact form $\alpha$ has no contractible periodic Reeb orbit. So, the related conditions in Theorem \ref{thm:Tight_Parts_on_Folded_Sums_Reeb} are all satisfied. 

\medskip
Now fix any two self-diffeomorphisms $$\phi_1, \phi_2 \in \textrm{Exact}(D^*T^3,\partial (D^*T^3), d\beta)=\textrm{Exact}(T^3\times D^3,T^3\times S^2, d\beta),$$ and consider the contact mapping tori
$\pi_i:\mathcal{M}(T^3\times D^3,\phi_i)\longrightarrow S^1$ for $i=1,2$. Then, by Theorem \ref{thm:Tight_Parts_on_Folded_Sums_Reeb}, the folded contact structure $\eta$ on the total space of their folded sum 
$\pi: \mathcal{M}:= \mathcal{M}(T^3\times D^3\cup_{T^3\times S^2}\overline{T^3\times D^3},\tilde{\phi}_1 \circ \tilde{\phi}_2) \longrightarrow S^1$ is tight.

\medskip
For instance, if we take $\phi_1=\phi_2=Id_{T^3\times D^3}$, then the folded contact structure $\eta$ (constructed as in Theorem \ref{thm:Contact_Structure_on_Folded_Sum}) is a tight contact structure on the total space $$\mathcal{M}\cong T^3\times S^3 \times S^1$$ of the folded sum of two copies of the trivial mapping torus $\mathcal{M}(T^3\times D^3,Id_{T^3\times D^3})\longrightarrow S^1$. 
 
\end{example}


\noindent \textbf{Data availability:} Data sharing not applicable as no dataset was generated in this work.

\noindent \textbf{Competing Interests:} The author has no relevant financial or non-financial interests
to disclose.

\end{document}